\documentclass[leqno,a4paper,12pt]{amsart}
\usepackage{amsmath,amssymb,amsthm}
\usepackage{amsmath,amssymb,amsthm,hyperref}
\usepackage[pagewise]{lineno}
\usepackage{graphicx}
\usepackage{version} 
\usepackage{color}
\usepackage{pstricks-add}
\usepackage[bf]{caption}     
\usepackage{psfrag}          
\usepackage{geometry}        
\usepackage{bm}  
\usepackage[utf8]{inputenc}
\usepackage{setspace}
\usepackage{wrapfig}
\usepackage{epstopdf}
\usepackage[mathscr]{eucal}
\usepackage{enumerate}
\usepackage{bm} 
 \newtheorem{theorem}{Theorem}

 \newtheorem{lemma}[theorem]{Lemma}
 \newtheorem{proposition}[theorem]{Proposition}
 
 {\theoremstyle{definition}
 \newtheorem{definition}[theorem]{Definition}
 \newtheorem{remark}[theorem]{Remark}

 }


%

\DeclareMathOperator{\disthau}{dist_{Hau}}

\def\N{\ensuremath{\mathbb N}} 
\newcommand{\sgn}{\operatorname{sgn}}
\newcommand{\DDD}{\Delta}

\newcommand{\LLL}{\Lambda}

\newcommand{\aaa}{\alpha}

\newcommand{\bbb}{\beta}

\newcommand{\cak}{{\mathcal K}}

\newcommand{\ddd}{\delta}
\newcommand{\dd}{\partial}

\newcommand{\ds}{\displaystyle}
\newcommand{\eee}{\varepsilon}

\newcommand{\mmm}{\mu}
\newcommand{\mm}{{\mathfrak m}}

\newcommand{\MM}{\mathfrak{M}}

\newcommand{\oo}{\infty}

\newcommand{\seq}{\succeq}

\newcommand{\sse}{\subset}
\newcommand{\sss}{\sigma}

\newcommand{\TTT}{\Theta}

\newcommand{\uM}{\underline{M}}

\newcommand{\xx}{\times}

\newcommand{\omm}{\overline{m}}

\newcommand{\saab}{_{\aaa,\bbb}}

\renewcommand{\ggg}{\gamma}

\renewcommand{\lll}{\lambda}

\begin{document}
\title{Isentropes and Lyapunov exponents}
\author[Z. Buczolich\ and \ G. Keszthelyi]{Zolt\'an Buczolich*  and  Gabriella Keszthelyi**}
\newcommand{\acr}{\newline\indent}
\address{\llap{*\,}Department of Analysis\acr
ELTE E\"otv\"os Lor\'and University\acr
P\'azm\'any P\'eter S\'et\'any 1/c, 1117 Budapest, Hungary\acr
ORCID ID: 0000-0001-5481-8797}
\email{ buczo@caesar.elte.hu}
\urladdr{http://buczo.web.elte.hu}
\address{\llap{**\,}Department of Analysis\acr
ELTE E\"otv\"os Lor\'and University\acr
P\'azm\'any P\'eter S\'et\'any 1/c, 1117 Budapest, Hungary\acr}
\email{keszthelyig@gmail.com}

\thanks{The first listed
author  was supported by the Hungarian
National Foundation for Scientific Research Grant 124003.
During the preparation of this paper this author was a visiting researcher
at the R\'enyi Institute.
\newline\indent The second listed
author  was supported by the Hungarian
National Foundation for Scientific Research Grant 124749.
\newline\indent {\it 2000 Mathematics Subject
Classification:} Primary 37B25; Secondary 28D20, 37B40, 37E05.
\newline\indent {\it Keywords:} skew tent map, topological entropy, isentrope, invariant measure, Lyapunov exponent, Markov partition}

\begin{abstract}
We consider skew tent maps $T_{ { \alpha }, { \beta }}(x)$ such that  $(\aaa,\bbb)\in[0,1]^{2}$ is the turning point of $T\saab$, that is, $T_{ { \alpha }, { \beta }}=\frac{ { \beta }}{ { \alpha }}x$ for $0\leq x \leq { \alpha }$ and $T_{ { \alpha }, { \beta }}(x)=\frac{ { \beta }}{1- { \alpha }}(1-x)$
for $ { \alpha }<x\leq 1$. 
 We denote by $\uM=K(\aaa,\bbb)$ the kneading sequence of
 $T\saab$, by $h(\aaa,\bbb)$ its topological entropy and 
 $\Lambda=\Lambda_{\alpha,\beta}$ denotes its Lyapunov exponent.
 For a given kneading squence $\uM$ we consider isentropes (or equi-topological entropy, or equi-kneading curves),
 $(\aaa,\Psi_{\uM}(\aaa))$  such that $K(\aaa,\Psi_{\uM}(\aaa))=\uM$. On these curves the topological entropy $h(\aaa,\Psi_{\uM}(\aaa))$ is constant.\\
 We show that  $\Psi_{\uM}'(\aaa)$ exists and the 
 Lyapunov exponent $\Lambda_{\alpha,\beta}$ can be expressed by using  the slope of the tangent to the isentrope.
Since this latter can be computed by considering partial derivatives of an auxiliary function $\TTT_{\uM}$,
a series depending on the kneading sequence
which converges at an exponential rate, this provides an efficient new
method of finding the value of the Lyapunov exponent of these maps.  
\end{abstract}

\maketitle

\section{Introduction}
Consider a point $(\alpha,\beta)$ in the unit square $[0,1]^2$.
 Denote by $T_{\alpha,\beta}(x)$ the skew tent map.
\begin{equation}\label{021602a}
T_{\alpha,\beta}(x) =\left\{
\begin{array}{clcr}
			 L_{\alpha,\beta}(x)=\frac{\beta}{\alpha} x & {\rm if}    & 0 \leq x \leq \alpha,      \\
			  R_{\alpha,\beta}(x)=\frac{\beta}{1-\alpha}(1-x)  &  {\rm if}  & \alpha< x \leq 1.
	\end{array}
\right.
\end{equation}
 To avoid trivial dynamics we suppose that $0.5< \beta\leq 1$ and
$\alpha \in (1-\beta,\beta)$. We denote by $U$ the region of $[0,1]^2$ consisting of these $[\alpha,\beta]$.  
We denote by $\uM=K(\aaa,\bbb)$ the kneading sequence of
 $T\saab$, by $h(\aaa,\bbb)$ its topological entropy and by $\Lambda=\Lambda_{\alpha,\beta}$ denotes its Lyapunov exponent.
 The set of all possible kneading sequences is denoted by $\MM=\{ K(\aaa,\bbb):
(\aaa,\bbb)\in U \}.$
 For a given kneading squence $\uM$ we consider isentropes (or equi-topological entropy, or equi-kneading curves)  $(\aaa,\Psi_{\uM}(\aaa))\in U$  such that $K(\aaa,\Psi_{\uM}(\aaa))=\uM$. On these curves the topological entropy $h(\aaa,\Psi_{\uM}(\aaa))$ is constant.
On Figure \ref{isererint1} on the left half $T_{.3,.8}$ is considered. On the bottom 
part of the figure one can see the first few entries of the kneading sequence.
To visualize the isentrope the computer plotted in black some pixels
which correspond to parameter values with similar initial segment of kneading sequence. To obtain a not too thick region the length of this initial segment
depends on the parameter region. For example on the left half of Figure \ref{isererint2} there is a thicker region, which can be made thinner by considering
longer initial segments. However if the initial segment is too long, the computer is not finding enough pixels from the given equi-kneading region, see for example
the right half of Figure \ref{isererint1} where close to the upper left corner of the unit square the plot is too thin. 
 
\begin{figure}[ht]
\includegraphics[width=1\textwidth]{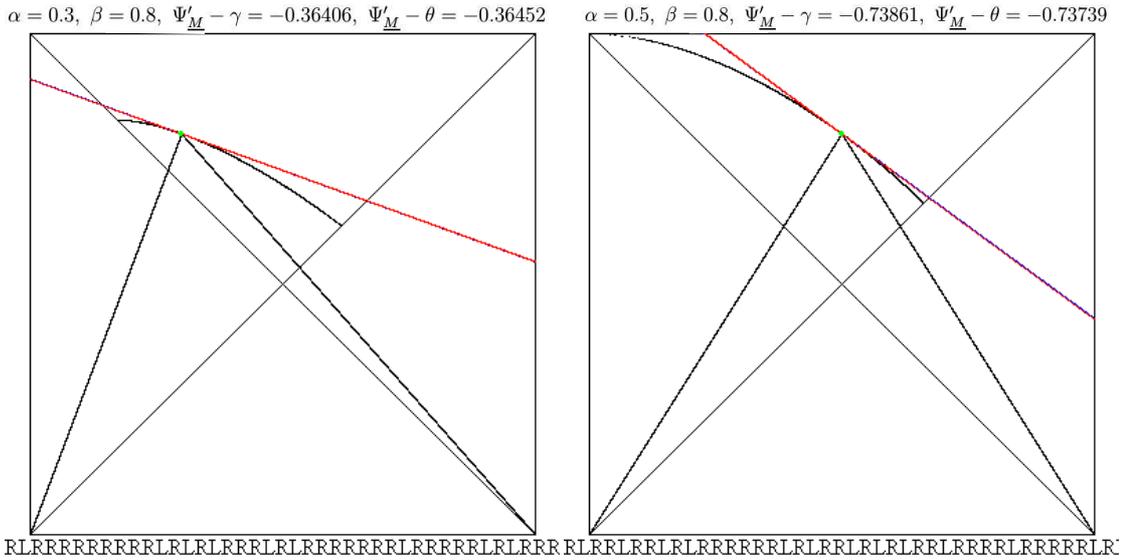}
\caption{Tangents to isentropes computed from $\ggg$ and from $\Theta$\label{isererint1}}
\end{figure}

We will see in this paper that the isentropes $(\aaa,\Psi_{\uM}(\aaa))$  are 
continuously differentiable curves namely we prove the following theorem. 
\begin{theorem}\label{thdiffnonmark}
If $\underline{M} \in \mathfrak{M}$ then $\Psi'_{\underline{M}}$ exists and is continuous on its domain, $(\alpha_1(\underline{M}),\alpha_2(\underline{M}))$.
\end{theorem}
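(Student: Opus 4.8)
The plan is to realize each isentrope, \emph{locally}, as the zero set of an explicit $C^1$ auxiliary function and then invoke the implicit function theorem. The turning point $\aaa$ has image $\bbb=T\saab(\aaa)$, and $\uM=K(\aaa,\bbb)$ is by definition the itinerary of this critical value under $T\saab$. Write $a=\aaa/\bbb$ and $b=(1-\aaa)/\bbb$ for the reciprocals of the two branch slopes; on $U$ one has $\aaa<\bbb$ and $1-\aaa<\bbb$, so $a,b\in(0,1)$. Inverting the branch relations $T\saab(x)=\frac{\bbb}{\aaa}x$ and $T\saab(x)=\frac{\bbb}{1-\aaa}(1-x)$ gives, for each orbit point, $x=a\,T\saab(x)$ on the left and $x=1-b\,T\saab(x)$ on the right. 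Iterating this identity along the prescribed symbol sequence $\uM$ reconstructs the critical value as a geometric-type series
\[
\bbb=\sum_{k\ge 1}\Big(\prod_{j=1}^{k-1}\sigma_j w_j\Big)\delta_k=:\TTT_{\uM}(\aaa,\bbb),
\]
where $w_j\in\{a,b\}$, $\sigma_j\in\{+1,-1\}$ and $\delta_k\in\{0,1\}$ are read off from the symbols of $\uM$.

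The next step is to establish the analytic regularity of $\TTT_{\uM}$. On any compact subset of $U$ one has $0<a,b\le q<1$, hence $|\prod_{j<k}\sigma_j w_j|\le q^{k-1}$ and the series converges absolutely at a geometric rate, as advertised. Since $\dd_{\aaa}a,\dd_{\bbb}a,\dd_{\aaa}b,\dd_{\bbb}b$ are bounded on compacta, the termwise-differentiated series is dominated by $\sum_k C\,(k-1)q^{k-2}<\infty$, so differentiation term by term is legitimate and $\TTT_{\uM}\in C^1$ (indeed real-analytic) on $U$. Consequently $\FFF_{\uM}(\aaa,\bbb):=\TTT_{\uM}(\aaa,\bbb)-\bbb$ is $C^1$. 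The reconstruction identity holds exactly when $\uM$ is the genuine itinerary of $\bbb$, i.e. precisely on the isentrope, so in a neighborhood of any of its points the isentrope coincides with $\{\FFF_{\uM}=0\}$.

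Granting the nondegeneracy $\dd_{\bbb}\FFF_{\uM}\neq 0$, the implicit function theorem then yields a $C^1$ solution $\bbb=\PPPM(\aaa)$ with
\[
\PPPM'(\aaa)=-\,\frac{\dd_{\aaa}\FFF_{\uM}(\aaa,\PPPM(\aaa))}{\dd_{\bbb}\FFF_{\uM}(\aaa,\PPPM(\aaa))},
\]
which is continuous, and a covering/connectedness argument extends this to the whole projected domain $(\aaa_1(\uM),\aaa_2(\uM))$; this simultaneously proves existence and continuity of $\PPPM'$, exactly as stated. This is also where the promised link to the tangent slope and to the partial derivatives of $\TTT_{\uM}$ enters.

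I expect the genuine obstacle to be the nondegeneracy, the strict sign of $\dd_{\bbb}\FFF_{\uM}$. Qualitatively it should follow from monotonicity in $\bbb$: for fixed $\aaa$, increasing $\bbb$ strictly increases both branch slopes, hence strictly advances the kneading sequence in its natural order, so the isentrope is a genuine graph over $\aaa$ crossed transversally in the $\bbb$-direction. Converting this into the quantitative inequality $\dd_{\bbb}\FFF_{\uM}\neq 0$ requires a sign-definite estimate on the differentiated series: one groups terms according to the expanding products of branch slopes and shows these dominate the contributions coming from $\dd_{\bbb}a$ and $\dd_{\bbb}b$. This estimate, rather than the implicit-function bookkeeping, is the heart of the matter, and it is precisely at the endpoints $\aaa_1(\uM),\aaa_2(\uM)$ — where the isentrope reaches $\dd U$ (so that $\aaa\to 1-\bbb$ or a branch degenerates) — that either the transversality or the geometric convergence can be lost, which is why the domain is the \emph{open} interval $(\aaa_1(\uM),\aaa_2(\uM))$.
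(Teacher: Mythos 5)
Your plan is the ``analytic method'' that the paper explicitly considers and then deliberately abandons, and your own write-up concedes the point at which it breaks: you never prove the nondegeneracy $\partial_{2}\Theta_{\underline{M}}(\alpha,\Psi_{\underline{M}}(\alpha))\neq 0$. Everything before that step (geometric convergence of the series, termwise differentiation, $C^1$ regularity of $\Theta_{\underline{M}}$ on compact subsets of $U$, the implicit-function bookkeeping) is routine and correct, but the theorem is carried entirely by the transversality claim, which you label ``the heart of the matter'' and then leave as a hope. Moreover, the heuristic you offer for it does not convert into a proof: monotonicity of kneading sequences in $\beta$ (Theorem A of \cite{[MV]}) only says that moving off the isentrope changes the kneading sequence; it says nothing about the fixed function $\Theta_{\underline{M}}$, whose zero set is only known to \emph{contain} the isentrope ($\Theta_{\underline{M}}=0$ is a necessary, not sufficient, condition for $K(\alpha,\beta)=\underline{M}$). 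So points just above or below the isentrope may still lie in $\{\Theta_{\underline{M}}=0\}$, and even if they do not, $\Theta_{\underline{M}}$ could vanish to higher order across the curve. A sign-definite lower bound on the differentiated alternating series, with exponents $\overline{m}_k$ depending on the kneading data, is exactly the ``unpleasant and technical task'' the authors defer to the separate manuscript \cite{[BKtheta]}; without it your argument establishes nothing beyond the Lipschitz regularity already available from \cite{[MV]}.

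For contrast, the paper's actual proof avoids $\Theta_{\underline{M}}$ entirely. It first gets $\Psi_{\underline{M}}$ Lipschitz (Proposition \ref{LIP}), hence differentiable a.e.; then, in the Markov case, Proposition \ref{prlanmark} identifies the a.e.\ derivative with the acim quantity $\gamma=\mu([0,\alpha])$ via \eqref{IL1*b}. Uniqueness of the acim (Lemma \ref{unique}) together with the stability result Proposition \ref{1032} forces any two limits of derivative values along sequences $\alpha_{i,n}\to\alpha_0$ to produce the same $\gamma_0=\mu_0([0,\alpha_0])$, so the derivative extends continuously to every point (Lemma \ref{thdiffmark}); the general case then follows by approximating non-Markov parameters by Markov isentropes, which are dense in $U$. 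If you want to salvage your route, you must actually prove the sign-definite estimate on $\partial_2\Theta_{\underline{M}}$; otherwise the ergodic-theoretic argument is the one that closes the gap.
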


In \cite{[BalSma2]} one can find results and general methods concerning isentropes 
and smoothness of isentropes
for different type perturbations of piecewise expanding unimodal maps.
It is a significant difference that in \cite{[BalSma2]} the horizontal coordinate of the turning point of the maps is fixed during these perturbations.

What we found really interesting that the derivatives of these curves can be used
to compute the Lyapunov exponents of the skew tent maps $T_{\aaa,\bbb}$.
This is the main result of this paper:

\begin{theorem}\label{thlamark}
Suppose $(\alpha_0,\beta_0) \in  U$, 
 $\Lambda = \Lambda_{\alpha_0,\beta_0}$ denotes the Lyapunov exponent of $T_{\alpha_0,\beta_0}$ and $(\alpha, \Psi_{\underline{M}}(\alpha))$ is the isentrope satisfying $\beta_0=\Psi_{\underline{M}}(\alpha_0)$.
 Then we have the following formula
\begin{equation}\label{IL1*a}
\Lambda_{\alpha_0,\beta_0}=\Lambda=\gamma \log \frac{\beta_0}{\alpha_0}+(1-\gamma)\log\frac{\beta_0}{1-\alpha_0}, \text{ where } \gamma \text{ satisfies }
\end{equation}
\begin{equation}\label{IL1*b}
\gamma=\frac{\frac{\Psi'_{\underline{M}}(\alpha_0)}{\beta_0}+\frac{1}{1-\alpha_0}}{\frac{1}{\alpha_0}+\frac{1}{1-\alpha_0}}=\alpha_0(1-\alpha_0)\frac{\Psi'_{\underline{M}}(\alpha_0)}{\beta_0}+\alpha_0.
\end{equation}
Moreover, if $\mu$ denotes the acim (absolutely continuous invariant measure) of $T_{\alpha_0,\beta_0}$ then
\begin{equation}\label{ILE2*a}
\gamma=\mu([0,\alpha_0]).
\end{equation}
\end{theorem}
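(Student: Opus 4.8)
The plan is to split the statement into its three formulas and to notice that \eqref{IL1*a} follows at once from \eqref{ILE2*a}, so that the real content lies in \eqref{IL1*b} together with the identification \eqref{ILE2*a}. First I would use that the acim $\mu$ of $T_{\alpha_0,\beta_0}$ is ergodic and apply the Birkhoff ergodic theorem to $\log|T'_{\alpha_0,\beta_0}|$, which is bounded and hence lies in $L^1(\mu)$, so that the Lyapunov exponent (for $\mu$, equivalently for Lebesgue-a.e.\ point) equals $\int\log|T'|\,d\mu$. Since $|T'_{\alpha_0,\beta_0}|$ equals $\beta_0/\alpha_0$ on $[0,\alpha_0]$ and $\beta_0/(1-\alpha_0)$ on $(\alpha_0,1]$, and $\mu$ has no atom at $\alpha_0$, this gives $\Lambda=\mu([0,\alpha_0])\log\frac{\beta_0}{\alpha_0}+(1-\mu([0,\alpha_0]))\log\frac{\beta_0}{1-\alpha_0}$. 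Putting $\gamma=\mu([0,\alpha_0])$ is then exactly \eqref{ILE2*a} and yields \eqref{IL1*a}, so everything reduces to proving \eqref{IL1*b}.

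To reach \eqref{IL1*b} I would compute $\Psi'_{\underline M}(\alpha_0)$ from the critical orbit. Write $c_1(\alpha)=\Psi_{\underline M}(\alpha)$ and $c_{n+1}(\alpha)=T_{\alpha,\Psi_{\underline M}(\alpha)}(c_n(\alpha))$; along the isentrope the kneading sequence is frozen, so the branch ($L$ or $R$) used at each $c_n$ is locally constant and the recursion may be differentiated termwise. Writing $\dot c_n=\frac{d}{d\alpha}c_n$, differentiation in $\alpha$ gives the linear recursion
\[\dot c_{n+1}=T'(c_n)\,\dot c_n+\psi_n\,c_{n+1},\qquad \psi_n=\frac{\Psi'_{\underline M}(\alpha)}{\beta}+\epsilon_n,\]
where $\epsilon_n=-1/\alpha$ on an $L$-step and $\epsilon_n=+1/(1-\alpha)$ on an $R$-step. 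The clean observation is that, because $T'$ is piecewise constant in $x$, $\psi_n$ is precisely the parameter derivative $\frac{d}{d\alpha}\log|T'(x)|$ evaluated at the frozen point $x=c_n$, which tightly links the source terms to the multipliers. Solving the recursion with $D_n=\prod_{k=1}^{n}T'(c_k)=(T^n)'(\beta)$ (and $D_0=1$) yields
\[\frac{\dot c_n}{D_{n-1}}=\Psi'_{\underline M}(\alpha)+\sum_{k=1}^{n-1}\frac{\psi_k\,c_{k+1}}{D_k}.\]

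Letting $n\to\infty$ and using that the uniform expansion forces $\dot c_n/D_{n-1}\to0$ (equivalently, that the exponentially convergent series on the right sums to $0$), I obtain a single scalar equation $\Psi'_{\underline M}(\alpha_0)\big(1+\tfrac1{\beta_0}S_0\big)=-S_1$, with $S_0=\sum_{k\ge1}c_{k+1}/D_k$ and $S_1=\sum_{k\ge1}\epsilon_k c_{k+1}/D_k$; these are the partial derivatives of the auxiliary series $\Theta_{\underline M}$. It then remains to identify the resulting expression with the acim. For this I would use the explicit series representation of the invariant density of $T_{\alpha_0,\beta_0}$ in terms of the critical orbit $(c_n)$ and the multipliers $(D_n)$ — a function of bounded variation whose jumps sit at the $c_n$ with weights of size $1/|D_n|$ — to compute $\mu([0,\alpha_0])$ as a series in the same data, and then check by direct algebra that $\mu([0,\alpha_0])=\alpha_0(1-\alpha_0)\frac{\Psi'_{\underline M}(\alpha_0)}{\beta_0}+\alpha_0$. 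A useful cross-check is that this candidate value must satisfy the invariance identity $F(y)=F(\tfrac{\alpha_0}{\beta_0}y)+1-F(1-\tfrac{1-\alpha_0}{\beta_0}y)$ for the distribution function $F$ of $\mu$, which the formula-defined $\gamma$ should verify.

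The main obstacle is exactly this bridge between the geometric quantity $\Psi'_{\underline M}(\alpha_0)$ and the dynamical quantity $\mu([0,\alpha_0])$. Concretely there are two points: (i) justifying that the differentiated recursion closes, i.e.\ $\dot c_n/D_{n-1}\to0$, which I expect to follow from the exponential growth $|D_n|\asymp e^{n\Lambda}$ together with the differentiability and continuity supplied by Theorem \ref{thdiffnonmark}; and (ii) showing that the series $S_0,S_1$ read off from the isentrope slope reassemble into $\mu([0,\alpha_0])$ through the density. The termwise differentiation is legitimate because the itinerary, hence the branch structure, is frozen along the isentrope, and the absolute convergence of every series is governed by $1/|D_k|$; but the measure-theoretic identification in (ii), including the reconciliation of the signed $D_k$ in $S_0,S_1$ with the $1/|D_n|$ weights of the density, is where the substantive work lies.
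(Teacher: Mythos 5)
Your first paragraph is fine and matches the paper's own use of Birkhoff's theorem: once $\gamma=\mu([0,\alpha_0])$ is shown to satisfy \eqref{IL1*b}, both \eqref{IL1*a} and \eqref{ILE2*a} follow, so the entire content of the theorem is the bridge between the parameter-space quantity $\Psi'_{\underline{M}}(\alpha_0)$ and the dynamical quantity $\mu([0,\alpha_0])$. But that bridge is exactly what your proposal does not prove. In your step (ii) you invoke an explicit series representation of the invariant density ``in terms of the critical orbit $(c_n)$ and the multipliers $(D_n)$'' and then assert that one can ``check by direct algebra'' that the resulting expression for $\mu([0,\alpha_0])$ equals $\alpha_0(1-\alpha_0)\Psi'_{\underline{M}}(\alpha_0)/\beta_0+\alpha_0$. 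No such representation is stated, normalized, or justified (none appears in the paper either), and the required bookkeeping --- where each $c_n$ sits relative to $\alpha_0$ (i.e.\ the kneading data), the normalization constant of the density, and the reconciliation of the signed $D_k$ in your $S_0,S_1$ with the $1/|D_k|$ jump weights --- is precisely the computation you defer. This is the ``analytic method'' via $\Theta_{\underline{M}}$ that the authors explicitly set aside as ``a quite unpleasant and technical task'' and relegate to the separate manuscript \cite{[BKtheta]}; announcing the identity together with a plausibility cross-check is not a proof of it, so the central claim is assumed rather than derived.

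A second, smaller gap is your step (i): $\dot c_n/D_{n-1}\to 0$ is not automatic, since generic solutions of your differentiated recursion grow like $|D_{n-1}|$, and boundedness of $c_n\in[0,1]$ does not preclude a large derivative at a single parameter. To exclude a nonzero limit $L(\alpha_0)$ you need uniform-in-$\alpha$ convergence of the series on a fixed parameter interval, together with the frozen sign pattern of $D_n(\alpha)$ along the isentrope, so that $L\neq 0$ forces $|c_{n+1}(\alpha)-c_{n+1}(\alpha')|\to\infty$ across that interval, contradicting $c_{n+1}\in[0,1]$; this is fixable but must be argued, and it also needs Theorem \ref{thdiffnonmark} (legitimately available at this point in the paper) to even differentiate along the isentrope. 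It is worth noting that the paper's actual proof never touches any density formula: it proves the Markov case (Proposition \ref{prlanmark}) by a ``total length must fit in $[0,1]$'' contradiction applied to the monotonicity intervals of $T^N_{\alpha_0,\beta_0}$ under a perturbation along the isentrope --- structurally the same compactness trick you need in step (i), but applied where it closes the argument --- and then handles $K(\alpha_0,\beta_0)\in\mathfrak{M}_{\infty}$ by approximating with Markov isentropes, the Mean Value Theorem, and $L^1$-stability of invariant densities (Proposition \ref{1032}), so that $\gamma_{n_k}\to\gamma_0$ and \eqref{IL1*b} passes to the limit.
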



To study equi-topological entropy, or equi-kneading curves in the region $U$ 
in \cite{[STQ]}
we introduced
the auxiliary functions $\TTT_{\uM}$.
Suppose that we have a  given kneading-sequence $\uM$ and 
\begin{equation}\label{041801aa}
\underline{M}^{-}=R\underbrace{L\dots L}_{m_1}R\underbrace{L\dots L}_{m_2}R\underbrace{L\dots L}_{m_3}R \dots .
\end{equation}
Here $\uM=\underline{M}^{-}$ if the turning point is not periodic, that is
$T^k_{\aaa,\bbb}(\aaa)\not=\aaa$ for $k\in\N$. In this case there is no $C\in \uM$.
The set of such kneading sequences is denoted by $\MM_{\oo}$.
The cases when the truning point is periodic, that is when $C$ appears in
$\uM$ will play a very important role in this paper.
 The set of these kneading sequences is denoted by $\MM_{<\oo}$.
These are the ones ending with $C$. In this case $\underline{M}^{-}$
can be defined in many ways. One such way was discussed in \cite{[STQ]}.
However, for our definition of suitable $\TTT_{\uM}$ functions any of the following definitions can be used. 
Concatenate $\uM$ with itself infinitely many times. Then
in the right infinite (right) periodic sequence replace the $C$s in an arbitrary manner with $R$s and $L$s. 

For example in our computer simulations each $C$
was replaced by an $L$. This is due to the fact that if $T^k_{\aaa,\bbb}(\aaa)=\aaa$ then $T^{k+1}_{\aaa,\bbb}(\aaa)=\bbb=L_{\aaa,\bbb}(T^k_{\aaa,\bbb}(\aaa))=R_{\aaa,\bbb}(T^k_{\aaa,\bbb}(\aaa))$, that is both the left- 
and right- ``half definitions" of $T^k_{\aaa,\bbb}$ can be used in this case.

\begin{table}
\begin{tabular}{|l|l|l|l|l|}
\hline
$\alpha$&$\beta$&$\gamma$&$\Psi_{\underline{M}}'$--$\gamma$&$\Psi_{\underline{M}}'$--$\Theta$\\ \hline
.3&.8&.20444&-.36406&-.36452\\ \hline
.49&.56&.30996&-.40344&-.4244\\ \hline
.5&.7&.27034&-.64303&-.64064\\ \hline
.5&.8&.26918&-.73861&-.73739\\ \hline
.6&.75&.35597&-.76258&-.76132\\ \hline
.6&.9&.47736&-.4599&-.45991\\ \hline
\end{tabular}
\caption{Tangents calculated from $\Theta$ and $\gamma$\label{tab1}}
\end{table}

We put $\omm_{k}=m_1+m_2+\dots+m_k$ with $m_{i}$ defined in \eqref{041801aa} and
\begin{equation}\label{02261a}
  \Theta_{\underline{M}}(\alpha,\beta)=1-\beta+\sum_{k=1}^{\infty}(-1)^{k}\left(\frac{1-\alpha}{\beta}\right)^k\left(\frac{\alpha}{\beta}\right)^{{\overline m}_k}.
\end{equation}
In \cite{[STQ]} we showed
that for $(\aaa,\bbb)\in U$ it follows from $K(\aaa,\bbb)=\uM$ that
$\TTT_{\uM}(\aaa,\bbb)=0$. This means that the equi-topological entropy curve
$\{ (\aaa,\bbb)\in U : K(\aaa,\bbb)=\uM \}$ is a subset of
$\{ (\aaa,\bbb)\in U: \TTT_{\uM}(\aaa,\bbb)=0 \}$, the zero level set of $\TTT_{\uM}$.  
This means that the isentrope $(\aaa,\Psi_{\uM}(\aaa)) $ satisfies
the implicit equation $\TTT_{\uM}(\aaa,\Psi_{\uM}(\aaa))=0$.
By implicit differentiation 
\begin{equation}\label{*implder}
\Psi_{\uM}'(\aaa)=-\frac{\dd_{1}\TTT_{\uM}(\aaa,\Psi_{\uM}(\aaa))}{\dd_{2}\TTT_{\uM}(\aaa,\Psi_{\uM}(\aaa))},
\end{equation}
provided that $\dd_{2}\TTT_{\uM}(\aaa,\Psi_{\uM}(\aaa))\not = 0.$
Since the series in \eqref{02261a} converges at an exponential rate if we consider the partial derivatives we also obtain an exponential convergence rate for the partial
derivatives and hence it is very easy to compute/approximate $\Psi_{\uM}'(\aaa)$
by using \eqref{*implder}. On Figures \ref{isererint1}, \ref{isererint2} and in Table
\ref{tab1} the entries 
 $\Psi_{\uM}'-\TTT$
were computed by using this implicit differentiation method by taking into consideration the first $200$ elements of the kneading sequence.

The other approach is to estimate  $\Psi_{\uM}'(\aaa)$ via the Lyapunov exponents.
For the skew tent map $T_{\aaa,\bbb}$, $(\aaa,\bbb)\in U$ there is a unique ergodic {\it acim}
$\mmm_{\aaa,\bbb}=\mmm$, that is a measure absolutely continuous with respect to the Lebesgue measure, $\lll$. Its density $f$ is an invariant function/fixed point of the Frobenius-Perron operator $P_{\alpha,\beta}$, that is $P_{\alpha,\beta}f=f$.
By Birkhoff's ergodic theorem the Lyapunov exponent
\begin{equation}\label{*lyadef}
\LLL_{\aaa,\bbb}=\lim_{N\to \oo}\frac{1}{N}\log |(T_{\aaa,\bbb}^{N})'(x)|=
\lim_{N \rightarrow \infty} \frac{1}{N} \sum_{n=0}^{N-1} \log |T_{\aaa,\bbb}'(T_{\aaa,\bbb}^{n}(x))|
\text{ for }\mmm\text{ a.e. }x.
\end{equation} 
In case of skew tent maps $|T_{\aaa,\bbb}'(x)|=\bbb/\aaa$ if $x<\aaa$ and
$|T_{\aaa,\bbb}'(x)|=\bbb/(1-\aaa)$ if $x>\aaa$ hence if we let 
\begin{equation}\label{IL3*aa}
\gamma= \lim_{N \rightarrow \infty} \frac{1}{N} \sum_{n=0}^{N-1} \chi_{[0,\alpha]}(T^n_{\alpha,\beta}(x))\text{ then }\LLL_{\aaa,\bbb}=\ggg\log\frac{\bbb}{\aaa}+
(1-\ggg)\log \frac{\bbb}{1-\aaa},
\end{equation}
for $\mmm$ a.e. $x$.

\begin{figure}[ht]
\includegraphics[width=1\textwidth]{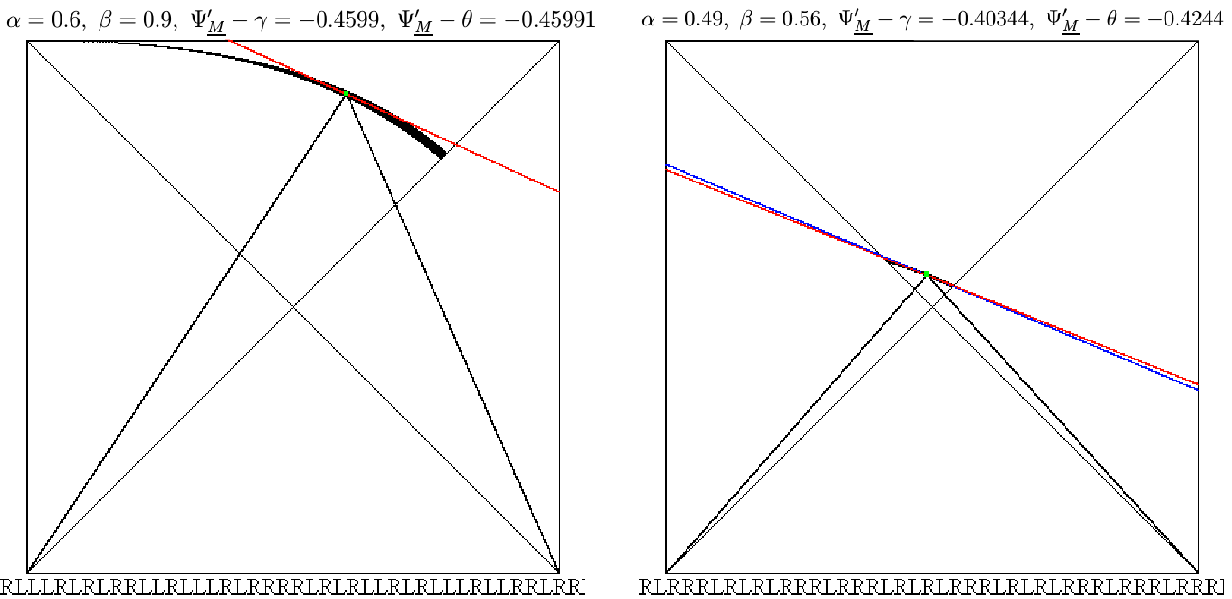}
\caption{More tangents to isentropes computed from $\ggg$ and from $\Theta$\label{isererint2}}
\end{figure}

Hence to estimate the Lyapunov exponent we need to estimate $\ggg$.
This is usually done by using a computer program. For a sufficiently large $N$
and a ``randomly" selected $x$ one computes the sum in \eqref{IL3*aa}.
Actually we have done this as well in our computer simulations. It has turned out that $N=200 000$ was sufficiently large to have a reasonably good estimate for $\ggg$.
In Table \ref{tab1} there is a column $\ggg$ containing these estimates for
the randomly selected parameter values. According to \eqref{IL1*a} of Theorem \ref{thlamark} $\ggg$, and hence $\LLL_{\aaa,\bbb}$ can be expressed by using 
$\Psi'_{\underline{M}}(\alpha)$.
Since $\Psi'_{\underline{M}}(\alpha)$ can be calculated by \eqref{*implder}
using  \eqref{*lyadef}, \eqref{IL3*aa} and \eqref{IL1*a} we can calculate
the Lyapunov exponent for any $T_{\aaa,\bbb}$ with $(\aaa,\bbb)\in U.$
To illustrate the connection between $\Psi'_{\underline{M}}(\alpha)$
and $\LLL_{\aaa,\bbb}$, or $\ggg$ in our computer simulations followed a reverse
approach, using $\ggg$ from \eqref{IL1*a} one can obtain
\begin{equation}\label{IL1*bb}
\Psi'_{\underline{M}}(\alpha)=\frac{(\ggg-\aaa)\bbb}
{\aaa(1-\aaa)} .
\end{equation}
 This means that the computer program calculated an estimate of 
$\ggg$ (and hence of $\LLL$) and this estimate was 
used for calculating the slope of an approximate tangent to the isentrope.
As the images show this method, based on \eqref{IL1*bb}
works, that is the approximate tangents really seem to be tangent
to the isentrope.

In Table \ref{tab1} there is a column  labeled $\Psi_{\underline{M}}'$--$\Theta$
which contains the estimates we obtained for $\Psi'_{\underline{M}}(\alpha)$
by using the estimate for $\ggg$ based on \eqref{IL3*aa}.
As one can see that the estimates we obtained for $\Psi'_{\underline{M}}(\alpha)$
by using the $\TTT_{\uM}$ function in \eqref{*implder} are quite close to the ones 
obtained by using $\ggg$.
On Figures \ref{isererint1} and \ref{isererint2} we plotted both approximate tangents 
to the isentropes, the one calculated from $\ggg$ and the one calculated from $\TTT_{\uM}$. On the color pdf version of the paper the first approximate tangent is in red and the second is in blue. In case only one, the red tangent is visible then
it means that the two approximate tangents are on top of each other. It is also visible that they are indeed "tangent" to the isentrope as well.
On the right half of Figure \ref{isererint2} the two approximate tangents are not exactly on top of each other. This is due to the fact that for the parameter values
$\aaa=0.49$ and $\bbb=0.56$ both $\aaa/\bbb$ and $(1-\aaa)/\bbb$
are close to one and the convergence in the series giving the partial derivatives
of $\TTT_{\uM}$ is slower. To get a better estimate
one needs to consider more than the first $200$
entries of the kneading sequence. On this figure the tiny black region
corresponding to the equi-kneading region is almost completely covered by the blue and red approximate tangents.
We would like to emphasize that our new method based on $\TTT_{\uM}$, even if the number of
iterates is increased from $200$ to a larger number requires still much less many iterates than the other method which needed $1000$ times more iterates
for about the same accuracy.  

Finally, there is one more illustration showing that indeed there is a link between
$\ggg$ and $\Psi'_{\underline{M}}(\alpha)$. On Figure \ref{slycsalad}
the color of pixels in $U$ was calculated based on the first $10$ entries of the kneading sequence. Hence equi-kneading regions containing isentropes are of the same color (modulo screen/pixel resolution). We also plotted three skew tent maps with three different colors and the approximate tangent line computed by using $\ggg$ from \eqref{IL3*aa} substituted into \eqref{IL1*bb}.

In \cite{[BB]} for the Markov case a histogram of the distribution
of the location in the Markov partition  of the first $50 000$ iterates of a "generic" point
is used to approximate the piecewise constant invariant density function of the acim.
Here again a rather high number of iterates was used.
In \cite{[LC]} a central limit theorem is discussed for the convergence in \eqref{IL3*aa}. 
 The other method, discussed in \cite{[BB]}
is based on the fact that if $K(\aaa,\bbb)\in \MM_{<\oo}$, that is when 
the turning point is 
periodic for $T_{\aaa,\bbb}$ then there is a Markov partition for $T_{\aaa,\bbb}$.
Based on the Markov partition one can  obtain a system of linear equations
and the solution of this system gives us the invariant density function
$f_{\aaa,\bbb}$ of the acim $\mmm_{\aaa,\bbb}$ of $T_{\aaa,\bbb}$.
Then $\ggg=\mmm_{\aaa,\bbb}([0,\aaa])$. (In \cite{[BB]} a different parametrization and notation was used, but we translated it to our notation.)
The drawback of this calculation is that the number of equations is the number 
of elements in the Markov partition. If $K(\aaa,\bbb)\in \MM_{\oo}$ then there is 
no Markov partition, but isentropes corresponding to skew tent maps with Markov
partition are dense in $U$. It was remarked in \cite{[BB]} that in this case
we can also approximate the invariant density by invariant densities of Markov skew tent maps.  In this case the number 
of elements in the Markov partition of these appproximating maps tends to infinity,
making it more and more difficult to solve the system of linear equations.
It also seems for us that Theorem 10.3.2 from \cite{[BG]} was used in an incorrect way in \cite{[BB]}. By this we mean,
 that the way these Markov skew tent maps are approximating the non-Markov one is not satisfying the exact assumptions
of Theorem 10.3.2 in \cite{[BG]}. Since in our paper we also need  approximations
 of skew tent maps by other ones in Proposition \ref{1032} we clarify the way these approximations
work.
For some specific Markov parameter values in \cite{[MaKa]} a central limit behavior is discussed. 

We thank the referee for pointing out that based on methods of \cite{[BalSma1]} see equations (21) and (35) of  \cite{[BalSma1]} another efficient method for estimating the invariant density can be obtained.

Properties of isentropes, especially connectedness in different families of dynamical systems were also studied for example in 
 \cite{[BvS]}, \cite{[MTr]}  and \cite{[Rad]}.

\begin{figure}[ht]
\includegraphics[width=0.7\textwidth]{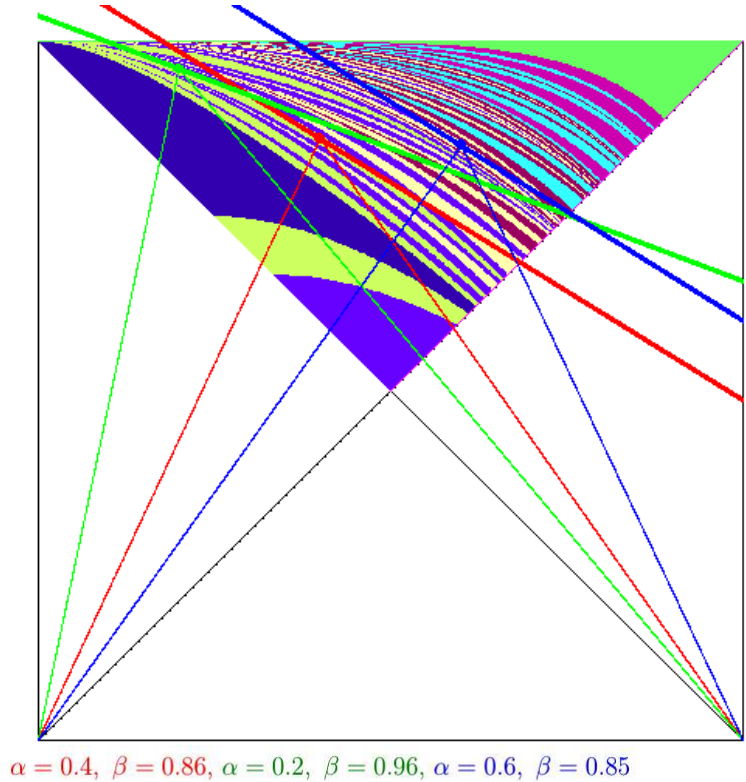}
\caption{Isentropes and tangents computed from $\ggg$\label{slycsalad}}
\end{figure}

This paper is organized the following way. In Section \ref{secprelim} we recall some definitions and results concerning skew tent maps and invariant densities.
In Section \ref{ACIM} we continue to discuss some known results about
absolutely continuous invariant measures and prove Proposition \ref{1032}
which will be the key lemma about approximations of skew tent maps by other ones.
This section concludes with some remarks about uniform Lipschitz properties of isentropes. 

The most involved part of the paper is Section \ref{secismar} in which we prove
Proposition \ref{prlanmark}. This is a special version of the main result of the paper about the relationship between Lyapunov exponents and tangents to isentropes. 
In this proposition we suppose that the isentrope is differentiable at the point considered and we also suppose that
we work with a Markov map.
In later sections we aim towards Theorem \ref{thlamark} to use some 
approximation arguments to remove the assumptions about 
differentiability
and Markovness.

In Section \ref{Disentr} by using Proposition \ref{prlanmark} first we show that
isentropes are continuously differentiable for Markov skew-tent maps.
In this argument we use Proposition \ref{1032} and approximations of our skew tent map by other ones with the same topological entropy.
Then by using another approximation argument based on Proposition \ref{1032}
and approximation of non-Markov maps by Markov maps we generalize this result for arbitrary maps.

Finally, in Section \ref{secisnonmar} we prove Theorem \ref{thlamark} which is the main result of our paper. It is again an approximation argument of non-Markov maps by Markov maps. This way we obtain the general version
of Proposition \ref{prlanmark}. 

\section{Preliminaries}\label{secprelim}

Kneading theory was introduced by J. Milnor and W. Thurston in \cite{[MT]}.
For symbolic itineraries and for the kneading sequences we follow the notation of \cite{[CE]}.

Suppose $T=T_{\aaa,\bbb}$ is fixed for an $(\aaa,\bbb)\in U$ and $x\in [0,1]$.
The extended kneading sequence 
$K(\aaa,\bbb)=\uM_{{\mathrm ext}}=(\mm_{1},\mm_{2},...)\in \{ L,R,C \}^{\N}$
is defined as follows.
If $T_{\aaa,\bbb}^{n}(\aaa)<\aaa$ then $\mm_{n}=L$, 
if $T_{\aaa,\bbb}^{n}(\aaa)=\aaa$ then $\mm_{n}=C$,
and if $T_{\aaa,\bbb}^{n}(\aaa)>\aaa$ then $\mm_{n}=R$.
If there is no $C$ in $\uM_{{\mathrm ext}}$ then the kneading sequence $K(\aaa,\bbb)=\uM=\uM_{{\mathrm ext}}$.
If there are $C$s in $\uM_{{\mathrm ext}}$  then the kneading squence $K(\aaa,\bbb)=\uM$ is a finite string which is obtained by stopping at the first $C$ and throwing away
the rest of the infinite string $\uM_{{\mathrm ext}}$.

Following notation of \cite{[MV]} we denote by $\mathfrak{M}$ the class of kneading sequences $K(0.5,\beta), \,\beta \in (0.5,1]$, which is identical to all
 possible kneading sequences of the form $K(\aaa,\bbb)$, $(\aaa,\bbb)\in U$. 

In \cite{[MV]} a different
parametrization of skew tent maps was used. The functions
$$F_{\lambda,\mu}(x) =\left\{
\begin{array}{clcr}
			{1+\lambda}x & {\rm if}   & x \leq 0      \\
			1-\mu x    &  {\rm if}  & x \geq 0
	
		\end{array}
\right.$$
were considered on $\mathbb{R}$. 

A simple calculation shows that if $(\alpha,\beta) \in U$ then $(\lambda(\alpha,\beta),\mu(\alpha,\beta))=(\frac{\beta}{\alpha},\frac{\beta}{1-\alpha})$ belongs to the region $D'=\{(\lambda, \mu):\lambda > 1, \mu>1, \frac{1}{\lambda}+\frac{1}{\mu} \geq 1\}$
this, apart from a boundary segment, coincides with the parameter region
$D=\{(\lambda, \mu):\lambda \geq 1, \mu>1, \frac{1}{\lambda}+\frac{1}{\mu} \geq 1\}$
 considered in \cite{[MV]}.
In \cite{[STQ]} we gave the explicit formula for the linear homeomorphism
showing that
$T_{\aaa,\bbb}$ and $F_{\lambda(\alpha,\beta),\mu(\alpha,\beta)}$ 
are topologically conjugate.
We use the notation $\cak(\lll,\mmm)$ for the kneading sequence
of $F_{\lll,\mmm}$.
In this parametrization $\MM$ corresponds to the kneading sequences of functions $F_{\mu,\mu}$ with $1<\mu \leq 2$.

We denote by $\prec $ the parity lexicographical ordering of kneading sequences, symbolic itineraries, for the details see \cite{[CE]}.

Without discussing too much details of renormalization we need to say a few words about it. The interested reader is refered to more details in \cite{[CE]}
or \cite{[MV]}. For $j=0,1,...$ we denote by $\MM^{j}$ the set of those kneading
sequences $\uM$ for which there exists $\bbb \in (({\sqrt 2})^{j+1},{\sqrt 2}^{j}]$
such that $\uM=K(\frac{1}{2},\bbb).$
The kneading sequences in $\MM^{0}$ correspond to the non-renormalizable case.
We denote by $U^{j}$ the set of those $(\aaa,\bbb)\in U$
for which $K(\aaa,\bbb)\in \MM^{j}$. In \cite{[MV]}, $D_{0}$ denotes the region of
those $(\lll,\mmm)\in D$ for which $\lll>\frac{\mmm}{\mmm^{2}-1}$.
This is the non-renormalizable region in the $\lll-\mmm$-parametrization.
In \cite{[BB]} and \cite{[MV]} mainly the non-renormalizable region is considered.
In Section 5 of \cite{[MV]} renormalization, and the way of extension of the result
obtained for the non-renormalizable case is discussed. It turns out that if 
$\cak(\lll,\mmm)\in \MM^{j}$ with $j\geq 1$ then $F^{2}_{\lll,\mmm}$ can be restricted onto a suitable interval mapped into itself by this map. This restriction
is topologically conjugate to $F_{\mmm^{2},\lll\mmm}$ and $\cak(\mmm^{2},\lll\mmm)\in \MM^{j-1}$. In our parametrization if $K(\aaa,\bbb)\in \MM^{j}$
with $j\geq 1$ then $T^{2}_{\aaa,\bbb}$ restricted onto a suitable interval
is topologically conjugate to $T_{1-\aaa,\bbb^{2}/(1-\aaa)}$ and $K(1-\aaa,\bbb^{2}/(1-\aaa))\in \MM^{j-1}$. 
In this paper we only use that the density of Markov maps
in $U^{1}$, shown in \cite{[BB]} implies via renormalization
density of Markov maps in $U$.


We recall a corollary of Theorem C of \cite{[MV]} adapted to our $\aaa-\bbb$-parametrization.
\begin{theorem}\label{MVab}
For each $\uM \in \mathfrak{M}$ there exist two numbers $\alpha_1(\underline{M}) < \alpha_2(\underline{M})$ and a continuous function $\Psi_{\underline{M}}:(\alpha_1(\underline{M}),\alpha_2(\underline{M})) \rightarrow U$ such that for $(\alpha,\beta) \in U$ we have $K(\alpha,\beta)=\underline{M}$ if and only if $\bbb=\Psi_{\underline{M}}(\alpha)$. 
The graphs of the functions $\Psi_{\underline{M}}$ fill up the whole set U. Moreover, $\lim_{\alpha \rightarrow \alpha_1(\underline{M})+} \Psi_{\underline{M}} (\alpha)=1$ if $\underline{M}\seq  RLR^{\infty}$. If $\underline{M}\prec  RLR^{\infty}$ then the curve $(\alpha, \Psi_{\underline{M}}(\alpha))$ converges to a point on the line segment $\{(\alpha,1-\alpha):0<\alpha<\frac{1}{2}\}$
as $\aaa\to\aaa_{1}(\uM)+$.
 If $\underline{M}=RL^{\infty}$ then $\alpha_1(\underline{M})=0, \, \alpha_2(\underline{M})=1$ and $\Psi_{\underline{M}}(\alpha)=1$ for all $\alpha \in (0,1).$
\end{theorem}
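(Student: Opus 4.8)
The plan is to obtain Theorem~\ref{MVab} by transporting Theorem~C of \cite{[MV]} through the change of variables relating the two parametrizations. Recall from \cite{[STQ]} that $T_{\aaa,\bbb}$ is topologically conjugate to $F_{\lll,\mmm}$ with $(\lll,\mmm)=(\bbb/\aaa,\bbb/(1-\aaa))$, so that the kneading sequence is preserved:
\[
K(\aaa,\bbb)=\cak\!\left(\tfrac{\bbb}{\aaa},\tfrac{\bbb}{1-\aaa}\right).
\]
First I would record that $\Phi:(\aaa,\bbb)\mapsto(\lll,\mmm)=(\bbb/\aaa,\bbb/(1-\aaa))$ is a diffeomorphism of $U$ onto $D'$ with inverse $\aaa=\mmm/(\lll+\mmm)$, $\bbb=\lll\mmm/(\lll+\mmm)$. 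A direct computation gives $\bbb\le 1\Leftrightarrow 1/\lll+1/\mmm\ge 1$, $\aaa+\bbb>1\Leftrightarrow\mmm>1$ and $\aaa<\bbb\Leftrightarrow\lll>1$; hence $\Phi$ carries the three boundary arcs of $U$, namely $\bbb=1$, the anti-diagonal $\bbb=1-\aaa$ and the diagonal $\bbb=\aaa$, onto the arcs $1/\lll+1/\mmm=1$, $\mmm=1$ and $\lll=1$ of $\partial D'$, respectively.

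Next I would invoke Theorem~C of \cite{[MV]}, which provides for each $\uM\in\MM$ a continuous kneading curve in $D$ along which $\cak$ is constantly $\uM$, these curves being pairwise disjoint and filling $D$. Pulling this back by $\Phi^{-1}$ and using the conjugacy identity, the level set $\{(\aaa,\bbb)\in U:K(\aaa,\bbb)=\uM\}$ is exactly the $\Phi$-preimage of the corresponding \cite{[MV]} curve; since $\Phi$ is a homeomorphism these preimages are continuous curves partitioning $U$ (the omitted boundary segment $\lll=1$ matches $U$ being defined with $\aaa<\bbb$ strict). To put them in graph form $\bbb=\Psi_{\uM}(\aaa)$ I would use that for fixed $\aaa$ the map $\bbb\mapsto K(\aaa,\bbb)$ is monotone for $\prec$, so at most one $\bbb$ realizes $\uM$; continuity of $\Psi_{\uM}$ on $(\aaa_{1}(\uM),\aaa_{2}(\uM))$ --- the open interval over which the curve projects --- then follows from continuity of $\Phi^{-1}$ and of the \cite{[MV]} curve.

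The remaining and most delicate task is the boundary behaviour. The comparison $\uM\seq RLR^{\infty}$ versus $\uM\prec RLR^{\infty}$ is an intrinsic condition on the abstract sequence, so the dichotomy in \cite{[MV]} governing which boundary arc of $D$ a kneading curve abuts transfers verbatim. By the correspondence of the first paragraph, the arc $1/\lll+1/\mmm=1$ is $\bbb=1$ and the arc $\mmm=1$ is the anti-diagonal; thus $\uM\seq RLR^{\infty}$ forces $\lim_{\aaa\to\aaa_{1}(\uM)+}\Psi_{\uM}(\aaa)=1$, while $\uM\prec RLR^{\infty}$ forces convergence to a point of $\{(\aaa,1-\aaa):0<\aaa<\tfrac{1}{2}\}$; indeed along $\mmm=1$ one has $\aaa=1/(\lll+1)$, $\bbb=\lll/(\lll+1)=1-\aaa$ with $\lll>1$, giving $0<\aaa<\tfrac{1}{2}$. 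Finally, for $\uM=RL^{\infty}$ a direct computation of the itinerary of the turning point of $T_{\aaa,1}$ yields $K(\aaa,1)=RL^{\infty}$ for every $\aaa\in(0,1)$, whence $\aaa_{1}=0$, $\aaa_{2}=1$ and $\Psi_{RL^{\infty}}\equiv 1$. The hard part will be the bookkeeping in this last step: one must check that the orientation of $\Phi$ and the placement of the threshold $RLR^{\infty}$ relative to $\partial U$ agree exactly with the conventions of \cite{[MV]}, since the ordering $\prec$ and the geometry of $U$ are intertwined through $\Phi$.
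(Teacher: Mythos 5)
Your proposal is correct and follows essentially the same route as the paper: the paper states Theorem \ref{MVab} without proof, presenting it as a corollary of Theorem C of \cite{[MV]} adapted to the $(\aaa,\bbb)$-parametrization via the conjugacy $(\lll,\mmm)=(\bbb/\aaa,\bbb/(1-\aaa))$, and that translation is exactly what you carry out. Your explicit verification of the diffeomorphism $U\to D'$, the boundary-arc correspondence, and the direct $RL^{\infty}$ computation merely fill in details the paper delegates to the citation.
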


For the skew tent map $T_{\alpha,\beta}, \:(\alpha,\beta) \in U$ we define the Frobenius-Perron operator $P_{\alpha,\beta} : L^1[0,1] \rightarrow L^1[0,1]$ by $$P_{\alpha,\beta} f(x)= \sum_{ z \in \{T_{\alpha,\beta} ^{-1} (x) \}} \frac{f(z)}{|T_{\alpha,\beta}'(z)|},$$ which
in a more explicit form is
\begin{equation}\label{FPdef}
P_{\alpha,\beta}f(x)=\frac{\alpha}{\beta} f\left(\frac{\alpha x}{\beta}\right)+\frac{1-\alpha}{\beta} f\left(1-\frac{1-\alpha}{\beta} x \right)\text{ if $0 \leq x \leq \beta$,}
\end{equation}
  and $P_{\alpha,\beta}f(x)=0$ if $x> \beta$.\\
 We also remind to the definition of the variation of a real function $f:[a,b] \rightarrow \mathbb{R}$.
 $$Vf=V_{[a,b]}f=\sup_{\mathcal{P}} \left\{\sum_{k=1}^n|f(x_k)-f(x_{k-1})|\right\}$$ where $\sup$ is taken for all partitions $\mathcal{P}=\{[x_0,x_1], [x_1,x_2], \dots [x_{n-1},x_n]\}$ of $[a,b]$. If $V_{[a,b]}f< +\infty$ then $f$ is of bounded variation, BV on $[a,b]$.
 \begin{definition}\label{MarkovT}
 Suppose $I=[a,b], \: T: I \rightarrow I.$ A partition \\
 $$\mathcal{P} =\{[a_0,a_1], [a_1,a_2], \dots, [a_{n-1},a_n]\}$$
 of $[a,b]$ is Markov for $T$ if for any $i=1, \dots,n$ the transformation $T|_{(a_{i-1},a_i)}$ is a homeomorphism onto the interior of the connected union of some elements of $\mathcal{P}$, that is onto an interval $(a_{j(i)},a_{k(i)})$.
 \end{definition}
Observe that if $T^n_{\alpha,\beta}(\beta)=\alpha$, that is C appears in $K(\alpha,\beta) \in \mathfrak{M}_{< \infty}$ then the partition determined by the points $\{0,\alpha,\beta,T_{\alpha,\beta}(\beta), \dots, T^{n-1}_{\alpha,\beta}(\beta),1\}$ provides a Markov partition.

\begin{section}{Absolutely continuous invariant measures and densities for skew tent maps}\label{ACIM}

The classical initial paper on the existence of absolutely continuous invariant measures is
\cite{[LasYor]}, however we will follow the more recent monograph \cite{[BG]}.
First we recall some definitions and results from p. 96 of  \cite{[BG]}.
We denote by $\mathcal{T}(I)$ the set of those transformations $ T: I \rightarrow I$ which satisfy the next two properties:
\begin{itemize}
  \item [I.] $T$ is piecewise expanding, that is there exists a partition
      $\mathcal{P}=\{I_i=[a_{i-1},a_i], i=1, \dots,n\}$ of $I$ such that $T|_{I_i}$ is $C^1$ and $|T'(x)|\geq \alpha>1$ for any $i$ and for all $x \in (a_{i-1},a_i).$
  \item [II.] $g(x)=\frac{1}{|T'(x)|}$ is a function of bounded variation, where $T'(x)$ is an appropriately calculated one-sided derivative at the endpoints of $\mathcal{P}$.
  \end{itemize}

For every $n \geq 1$ we define $\mathcal{P}^{(n)}$ as
$$\mathcal{P}^{(n)}=\bigvee_{k=0}^{n-1} T^{-k}(\mathcal{P})=\{I_{i_0}\: \cap \: T^{-1}(I_{i_1}) \: \cap \: \dots\cap\: T^{-n+1}(I_{i_{n-1}}):I_{i_j} \in \mathcal{P}, j=0, \dots,n-1\}.$$ 
One can easily see that if $T \in \mathcal{T}(I)$ then $T^n$ is piecewise expanding on $\mathcal{P}^{(n)}$. 

 Since  $|T'_{\alpha,\beta}(x)|=\frac{\beta}{\alpha}$ on $[0,\alpha]$ and $|T'_{\alpha,\beta}(x)|=\frac{\beta}{1-\alpha}$ on $[\alpha,1]$, 
 for $(\alpha,\beta) \in  U$
 we obtain that $T_{\alpha,\beta} \in \mathcal{T}([0,1])$
 with $\mathcal{P}=\{[0,\alpha],[\alpha,1]\}$.
 
The next theorem is about the existence of absolutely continuous invariant measures, acims and it is Theorem 5.2.1. from \cite{[BG]}.
\begin{theorem}\label{th521}
If $T \in \mathcal{T}(I)$ then it admits an absolutely continuous invariant measure, acim whose density is of bounded variation.
\end{theorem}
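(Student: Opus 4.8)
The plan is to follow the classical Lasota--Yorke argument, exploiting that the Frobenius--Perron (transfer) operator $P=P_T$ of $T$ acts nicely on the space of functions of bounded variation. Recall that $Pf(x)=\sum_{z\in T^{-1}(x)}\frac{f(z)}{|T'(z)|}$, that $P$ is a positive $L^1$-contraction preserving the integral ($\int_I Pf\,d\lll=\int_I f\,d\lll$), and that $f\,d\lll$ is an invariant measure precisely when $Pf=f$. So the whole statement reduces to producing a nonnegative BV fixed point of $P$.

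First I would establish the \emph{Lasota--Yorke inequality}: there are constants $C>0$ and $0<\rho<1$ and an integer $N$ such that for every $f$ of bounded variation
\[
V(P^N f)\le \rho\, V(f)+C\,\|f\|_{L^1}.
\]
The single-step estimate $V(Pf)\le 2\sup_x g(x)\,V(f)+A\|f\|_{L^1}$, with $g=1/|T'|$, comes from writing $Pf$ branch by branch over the partition $\mathcal{P}$, bounding the variation of each term $(f\cdot g)\circ(T|_{I_i})^{-1}$, and controlling the jumps at the endpoints $a_i$ using that $g$ is BV (Property II) and that $|T'|\geq\alpha>1$ (Property I). The factor $2$ is the obstruction: a single iterate need not contract. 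Passing to $T^N$, which again lies in $\mathcal{T}(I)$ with expansion constant $\geq\alpha^N$ and transfer operator $P^N$, replaces $\sup g$ by a quantity $\leq\alpha^{-N}$, so choosing $N$ with $2\alpha^{-N}<1$ yields a genuine contraction factor $\rho$. This is the main technical step.

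Granting the inequality, one iterates it to bound the variation of all iterates of the constant density $f_0\equiv 1$. The inequality forces $\limsup_k V(P^{kN}f_0)\leq C/(1-\rho)$ along the arithmetic progression of exponents, and the controlled growth of variation under each single application of $P$ then bounds every $V(P^k f_0)$ by a common constant $M$. Form the Ces\`aro averages $f_n=\frac1n\sum_{k=0}^{n-1}P^k f_0$. Each $f_n$ is a probability density with $V(f_n)\leq M$.

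Finally, since the set $\{f\geq 0:\ \|f\|_{L^1}\leq 1,\ V(f)\leq M\}$ is relatively compact in $L^1$ by Helly's selection theorem, a subsequence $f_{n_j}$ converges in $L^1$ to some $f_\ast$. A telescoping computation gives $Pf_n-f_n=\frac1n(P^n f_0-f_0)\to 0$ in $L^1$; by continuity of $P$ on $L^1$ we conclude $Pf_\ast=f_\ast$. The limit $f_\ast$ is nonnegative, satisfies $\int_I f_\ast\,d\lll=1$, and by lower semicontinuity of total variation under $L^1$ convergence satisfies $V(f_\ast)\leq M<\infty$. Hence $d\mu=f_\ast\,d\lll$ is an absolutely continuous invariant measure whose density is of bounded variation, which is exactly the claim.
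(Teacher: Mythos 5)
Your proof is correct and is essentially the classical Lasota--Yorke argument (a Lasota--Yorke inequality for $P^N$, uniform BV bounds on Ces\`aro averages, Helly compactness in $L^1$, and lower semicontinuity of variation), which is precisely the proof behind the result as the paper uses it: the paper does not prove this theorem itself but quotes it as Theorem 5.2.1 of the Boyarsky--G\'ora monograph \cite{[BG]}, where it is established by exactly this route. Your handling of the subtle point that one iterate need not contract (passing to $T^N\in\mathcal{T}(I)$, then averaging over all iterates of $P$ rather than only multiples of $N$) is the standard and correct fix.
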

In case of skew tent maps this acim is unique. Theorem 8.2.1 of \cite{[BG]} gives an upper bound on the number of distinct ergodic acims for a $T \in \mathcal{T}(I)$.
\begin{theorem}\label{th821}
Let $T \in \mathcal{T}(I)$ be defined on a partition $\mathcal{P}$. Then the number of distinct ergodic acims for $T$ is at most $\#\mathcal{P}-1$.
\end{theorem}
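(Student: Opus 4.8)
The plan is to bound the number of ergodic acims by counting through the supports of their densities. First I would invoke the ergodic decomposition together with Theorem~\ref{th521}: every acim has a density of bounded variation which is a fixed point of the Frobenius--Perron operator $P_T$ of $T$, and distinct ergodic acims are mutually singular. So if $\mu_1,\dots,\mu_p$ are the distinct ergodic acims, with $BV$ densities $f_1,\dots,f_p$ satisfying $P_Tf_i=f_i$, then their essential supports $S_1,\dots,S_p$ are pairwise disjoint modulo $\lambda$. It therefore suffices to bound the number of pairwise disjoint sets that can occur as supports of $BV$ invariant densities. Write $\mathcal{P}=\{[a_{i-1},a_i]:i=1,\dots,n\}$, so that $n=\#\mathcal{P}$ and $a_1,\dots,a_{n-1}$ are the interior partition points; the target bound is $p\le n-1$.

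Second, I would establish the regularity needed for the count: each support $S_i$ is, modulo $\lambda$, a finite union of closed intervals on which $f_i$ is bounded below by a positive constant. This is the standard consequence of $f_i\in BV$ together with $P_Tf_i=f_i$: the variation of $f_i$ limits the number of constituent intervals, and the fixed-point equation propagates a uniform positive lower bound across the support. Moreover, since $\mu_i$ is invariant and equivalent to $\lambda$ on $S_i$, the support is forward invariant modulo $\lambda$, that is $T(S_i)\subseteq S_i$ up to a null set.

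Third comes the expansion argument and the combinatorial count. No $S_i$ can lie inside a single monotonicity interval $I_k$: there $T|_{I_k}$ is injective with $|T'|\ge\alpha>1$, so $S_i\subseteq I_k$ would force $\lambda(T(S_i))=\int_{S_i}|T'|\,d\lambda\ge\alpha\,\lambda(S_i)>\lambda(S_i)$, contradicting $T(S_i)\subseteq S_i$ and $\lambda(S_i)>0$. Hence each $S_i$ meets at least two elements of $\mathcal{P}$, and using the finite-union-of-intervals structure together with invariance I would show that $S_i$ contains some interior partition point $a_{j(i)}$ in its essential interior, meaning a two-sided neighborhood of $a_{j(i)}$ lies in $S_i$ modulo $\lambda$. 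Since the $S_i$ are pairwise disjoint modulo $\lambda$, a given $a_j$ can be essentially interior to at most one $S_i$, so $i\mapsto j(i)$ is injective into $\{a_1,\dots,a_{n-1}\}$, giving $p\le n-1=\#\mathcal{P}-1$. The main obstacle lies precisely in the middle and final steps: proving that each invariant support is genuinely a finite union of intervals with a positive lower bound on the density, and upgrading ``meets two branches'' to ``contains a partition point in its essential interior.'' The delicate point is to exclude an $S_i$ that straddles $a_j$ only through a gap sitting exactly at $a_j$; ruling this out uses the forward invariance $T(S_i)\subseteq S_i$ and the fact that the endpoints of the constituent intervals of an invariant $BV$ density are constrained to the forward orbit of the partition endpoints, which forces a genuine two-sided neighborhood. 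The remaining ingredients — the ergodic decomposition, mutual singularity, and the expansion estimate — are routine once this regularity is secured. For skew tent maps this specializes to $\#\mathcal{P}=2$, hence at most one ergodic acim, consistent with the uniqueness asserted above.
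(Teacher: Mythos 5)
First, a point of comparison that matters here: the paper does not prove Theorem \ref{th821} at all --- it is quoted verbatim as Theorem 8.2.1 of \cite{[BG]} and used as a black box (its only role is to give, via $\#\mathcal{P}=2$, uniqueness of the ergodic acim for skew tent maps in Lemma \ref{unique}). So your attempt can only be measured against the classical proof in the literature. Your skeleton does match that classical argument: mutually singular ergodic acims have $\lambda$-essentially disjoint supports, expansion forces each support to contain an interior partition point essentially in its interior, and the $n-1$ interior partition points $a_1,\dots,a_{n-1}$ give the count. However, the two steps you yourself flag as ``the main obstacle'' are genuine gaps, not routine facts. (1) The structural claim that each support $S_i$ is a finite union of closed intervals on which $f_i$ is bounded below by a positive constant is \emph{not} a formal consequence of ``$f_i\in BV$ and $P_Tf_i=f_i$'': for a BV function the set $\{f_i>0\}$ is in general only a countable union of intervals, and the uniform positive lower bound on the support is precisely the nontrivial content of the support theorems in Chapter 8 of \cite{[BG]}, proved there with the Lasota--Yorke inequality and the spectral decomposition; invoking it is essentially invoking a result of the same depth as the one you are proving. (2) The upgrade from ``$S_i$ meets two elements of $\mathcal{P}$'' to ``$S_i$ contains some $a_j$ in its essential interior'' is the crux, and the mechanism you sketch --- that endpoints of the constituent intervals lie on forward orbits of partition endpoints --- is neither established nor, as stated, sufficient to exclude a support consisting of intervals on both sides of $a_j$ separated by a genuine gap.

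Both gaps can be closed, and more cheaply than your plan suggests. Since $f_i$ is BV with $\int f_i\,d\lambda=1$, it has a continuity point at which it is positive (its discontinuity set is countable), so $S_i$ contains an interval $J$ of positive length; no structure theorem is needed for this. Now iterate: if an interval $J'\subseteq S_i$ (mod $\lambda$) contains no interior partition point in its interior, then $J'$ lies in a single closed monotonicity interval, so $T(J')$ is again an interval, contained in $S_i$ mod $\lambda$ by your forward invariance, and $\lambda(T(J'))\ge\alpha\,\lambda(J')$. Since lengths grow geometrically but are bounded by $\lambda(I)$, some $T^k(J)$ must contain an interior partition point $a_{j(i)}$ in its interior, which is exactly the two-sided neighborhood you need; disjointness mod $\lambda$ then makes $i\mapsto j(i)$ injective and yields the bound $\#\mathcal{P}-1$. (Alternatively, if you do import the finite-union structure from \cite{[BG]}, the clean way to finish is to apply the expansion estimate to a component of \emph{maximal length}: if no component contained a partition point in its interior, the image of the longest component would be an interval of $S_i$ strictly longer than every component, a contradiction.) As written, your proposal is a correct plan whose two central steps are left unproved, so it does not yet constitute a proof.
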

In our case when $(\alpha,\beta) \in U$ and $I_0=[0,1]$ then $\mathcal{P}=\{[0,\alpha],[\alpha,1]\}.$  Since $\#\mathcal{P}=2$ we obtain that for $T_{\alpha,\beta}$ there is only one ergodic acim. Using this and the results about the spectral decomposition of the Frobenius-Perron operator in Chapter 7 of \cite{[BG]} one can see that invariant densities are linear combinations of densities of ergodic acims. Hence in case of our skew tent maps the following Lemma holds:
\begin{lemma}\label{unique}
For every $(\alpha,\beta) \in U$ there is a unique invariant density for $T_{\alpha,\beta}$, and it is the density of the unique ergodic acim.
\end{lemma}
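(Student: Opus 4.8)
The plan is to combine the existence statement Theorem~\ref{th521}, the bound on the number of ergodic acims in Theorem~\ref{th821}, and the spectral description of the Frobenius--Perron operator from Chapter~7 of \cite{[BG]}. First I would recall the fact already verified in this section, namely that $T_{\aaa,\bbb}\in\mathcal{T}([0,1])$ with the two-element partition $\mathcal{P}=\{[0,\aaa],[\aaa,1]\}$, so that $\#\mathcal{P}=2$. Theorem~\ref{th521} then guarantees at least one acim with density of bounded variation, and hence at least one ergodic acim, since any acim splits into ergodic components via the ergodic decomposition. On the other hand Theorem~\ref{th821} bounds the number of distinct ergodic acims by $\#\mathcal{P}-1=1$. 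Putting these two facts together yields \emph{exactly} one ergodic acim, whose density I would denote $f$.

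It then remains to upgrade ``exactly one ergodic acim'' to ``a unique invariant density.'' Here I would invoke the spectral decomposition of $P_{\aaa,\bbb}$ from Chapter~7 of \cite{[BG]}: the fixed-point space $\{g\in L^1[0,1]:P_{\aaa,\bbb}g=g\}$ of BV functions is spanned by the densities of the ergodic acims. Since there is a single ergodic acim, this fixed-point space is at most one-dimensional, spanned by $f$. Consequently any invariant density $g$ is a scalar multiple $g=cf$; normalizing so that $\int_0^1 g=\int_0^1 f=1$ forces $c=1$, hence $g=f$. This gives uniqueness of the invariant density and identifies it with the density of the unique ergodic acim.

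The main obstacle is not the counting argument, which is essentially bookkeeping once the two cited theorems are applied with $\#\mathcal{P}=2$, but the spectral input: the claim that the space of invariant densities is spanned by the ergodic acim densities. This is the substantive fact borrowed from \cite{[BG]}, and I would take care to check that the hypotheses under which that decomposition is established there---$T$ piecewise expanding with $1/|T'|$ of bounded variation---are precisely the two conditions defining the class $\mathcal{T}(I)$, which we have already confirmed for $T_{\aaa,\bbb}$ with $(\aaa,\bbb)\in U$. Granting this, the conclusion is immediate.
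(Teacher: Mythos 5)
Your proof is correct and follows essentially the same route as the paper: the paper likewise verifies $T_{\aaa,\bbb}\in\mathcal{T}([0,1])$ with $\mathcal{P}=\{[0,\aaa],[\aaa,1]\}$, uses Theorem~\ref{th521} for existence, Theorem~\ref{th821} with $\#\mathcal{P}=2$ to get exactly one ergodic acim, and then the spectral decomposition of the Frobenius--Perron operator from Chapter~7 of \cite{[BG]} to conclude that every invariant density is a linear combination of ergodic acim densities, hence unique after normalization. Your only addition is spelling out the normalization step $g=cf$, $c=1$, which the paper leaves implicit.
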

We need the next proposition  which is a variant of Theorems 10.2.1 and 10.3.2  in \cite{[BG]}.
\begin{proposition}\label{1032}
Suppose $(\alpha_n,\beta_n)\in U$ for $n=0,1, \dots, \quad (\alpha_n,\beta_n)\rightarrow (\alpha_0,\beta_0)$ and $\mathcal{P}_n=\{[0,\alpha_n],[\alpha_n,1]\}.$ Suppose that
\begin{equation}\label{partassn}
\begin{split}
   & \forall \: m \geq 1, \exists \: \delta_m>0 \text{ such that if } \\
     & \mathcal{P}_n^{(m)}=\bigvee_{j=0}^{m-1} T^{-j}_{\alpha_n,\beta_n}(\mathcal{P}_n) \text{ then }  \min_{I \in \mathcal{P}_n^{(m)}}\lambda(I) \geq \delta_m >0.
\end{split}
\end{equation}
Then:\\
$(A)$ For any density $f$ of bounded variation there exists a constant $M$ such that for any $n$ and $k=1,2, \dots$
$$V P^k_{\alpha_n,\beta_n} f \leq M.$$
This implies that for any $n$ there is an invariant density $f_n$ of $T_{\alpha_n,\beta_n}$ and the set $\{f_n\}$ is a precompact set in $L^1([0,1],\lambda)$. \\
$(B)$ Moreover, if $f_{n_k} \rightarrow f_0$ in $L^1$ then $f_0$ is an invariant density for $T_{\alpha_0,\beta_0}$.
\end{proposition}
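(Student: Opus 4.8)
The plan is to prove part $(A)$ by establishing a Lasota--Yorke type inequality for the Frobenius--Perron operators $P_{\alpha_n,\beta_n}$ whose constants are \emph{uniform in $n$}, from which invariant densities and their precompactness follow via Helly's selection theorem; part $(B)$ is then a continuity argument exploiting that $P_{\alpha,\beta}$ depends continuously on $(\alpha,\beta)$ through the explicit formula \eqref{FPdef}.

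For $(A)$ I would first record that, since $(\alpha_0,\beta_0)\in U$ makes both slopes $\beta_0/\alpha_0>1$ and $\beta_0/(1-\alpha_0)>1$ strictly larger than $1$, the convergence $(\alpha_n,\beta_n)\to(\alpha_0,\beta_0)$ yields an $s<1$ with $\sup_x 1/|T'_{\alpha_n,\beta_n}(x)|\le s$ for all large $n$; moreover $g_n=1/|T'_{\alpha_n,\beta_n}|$ is piecewise constant with two uniformly bounded values, so $Vg_n$ is uniformly bounded. Choosing $m$ with $2s^m<1$ and applying the classical Lasota--Yorke estimate of \cite{[BG]} to $T^m_{\alpha_n,\beta_n}$, which is piecewise expanding on $\mathcal{P}_n^{(m)}$, gives
\[
V\!\left(P^m_{\alpha_n,\beta_n}f\right)\le 2s^m\,Vf + B_n\,\|f\|_1 ,
\]
where the additive constant $B_n$ is controlled by $Vg_n$, $s$, and the reciprocal $1/\min_{I\in\mathcal{P}_n^{(m)}}\lambda(I)$. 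This is exactly where hypothesis \eqref{partassn} enters: it forces $\min_{I\in\mathcal{P}_n^{(m)}}\lambda(I)\ge\delta_m>0$ uniformly, so $B_n\le B$ with $B$ independent of $n$. Using that $P_{\alpha_n,\beta_n}$ is an $L^1$-contraction on densities, iterating gives $V(P^{mj}_{\alpha_n,\beta_n}f)\le (2s^m)^j Vf + \frac{B}{1-2s^m}\|f\|_1$ for all $j$, and auxiliary uniform bounds of the same type for the at most $m-1$ remaining iterates (legitimate because \eqref{partassn} also holds for each $r<m$) produce a constant $M$ with $V(P^k_{\alpha_n,\beta_n}f)\le M$ for all $k\ge1$ and all $n$. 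Existence of $f_n$ is then guaranteed by Theorem \ref{th521}; applying the bound to a fixed BV density such as $f\equiv 1$, the Ces\`aro averages $\frac1N\sum_{k=0}^{N-1}P^k_{\alpha_n,\beta_n}f$ have variation $\le M$ and $L^1$-norm $1$, so a subsequence converges in $L^1$, by Lemma \ref{unique} to the unique invariant density $f_n$, with $Vf_n\le M$ by lower semicontinuity of $V$. Since $\{f_n\}$ is bounded in $L^1$ with uniformly bounded variation, Helly's theorem gives precompactness in $L^1$.

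For $(B)$, suppose $f_{n_k}\to f_0$ in $L^1$. Using $P_{\alpha_{n_k},\beta_{n_k}}f_{n_k}=f_{n_k}$ and that $P_{\alpha_0,\beta_0}$ is an $L^1$-contraction, I would estimate
\[
\|P_{\alpha_0,\beta_0}f_0-f_0\|_1\le \|f_0-f_{n_k}\|_1 + \|P_{\alpha_0,\beta_0}f_{n_k}-P_{\alpha_{n_k},\beta_{n_k}}f_{n_k}\|_1 + \|f_{n_k}-f_0\|_1 .
\]
The first and third terms tend to $0$ by hypothesis, and the middle term is the heart of the matter. By \eqref{FPdef} the difference $P_{\alpha_0,\beta_0}f_{n_k}-P_{\alpha_{n_k},\beta_{n_k}}f_{n_k}$ splits into contributions from (i) the differences of the coefficients $\alpha/\beta$ and $(1-\alpha)/\beta$, (ii) the differences of the arguments $\alpha x/\beta$ and $1-(1-\alpha)x/\beta$, and (iii) the difference of the domains $[0,\beta_{n_k}]$ versus $[0,\beta_0]$. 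The uniform bound $Vf_{n_k}\le M$ with $\|f_{n_k}\|_1=1$ supplies a uniform $L^\infty$ bound and a uniform $L^1$-modulus of continuity $\|f_{n_k}(\cdot+h)-f_{n_k}\|_1\le M|h|$, so each of (i)--(iii) is $O(|\alpha_{n_k}-\alpha_0|+|\beta_{n_k}-\beta_0|)\to 0$ uniformly in the moving density. Hence the middle term vanishes and $P_{\alpha_0,\beta_0}f_0=f_0$, so $f_0$ is an invariant density for $T_{\alpha_0,\beta_0}$.

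The main obstacle is the \emph{uniformity} in $(A)$: for a single map these are the classical results of \cite{[BG]}, but here the additive constant in the Lasota--Yorke inequality for $T^m_{\alpha_n,\beta_n}$ grows like the reciprocal of the size of the smallest atom of $\mathcal{P}_n^{(m)}$, which could a priori degenerate as $n\to\infty$; hypothesis \eqref{partassn} is precisely the condition preventing this, and the delicate point is to check that it keeps every constant $n$-independent throughout the bootstrapping. A secondary difficulty, in $(B)$, is that the density $f_{n_k}$ moves together with the operator, so the continuity estimate for the middle term must be uniform over $\{f_{n_k}\}$ — which is exactly what the uniform variation bound obtained in $(A)$ provides.
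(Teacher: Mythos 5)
Your proof is correct, but it diverges from the paper's published argument, chiefly in part (B). For (A) the paper does not re-derive the uniform Lasota--Yorke inequality: it verifies that conditions (1)--(4) of Theorem 10.2.1 of \cite{[BG]} hold uniformly for the family $T_{\alpha_n,\beta_n}$ (uniform expansion via $(\alpha_n,\beta_n)\to(\alpha_0,\beta_0)$, a uniform variation bound on $1/|T'_{\alpha_n,\beta_n}|$, a uniform lower bound on the atoms of $\mathcal{P}_n$, and finally \eqref{partassn} as condition (4)) and then cites that theorem; your bootstrapping argument is essentially the proof of that theorem written out, with \eqref{partassn} entering exactly where you say it must. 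The genuine difference is in (B): the published proof invokes Keller's stochastic stability results (14.~Corollary and 15.~Remark of \cite{[Keller]}), which requires checking that some iterate $T^k_{\alpha_n,\beta_n}$ has derivative uniformly larger than $2$, that maximal monotonicity intervals have lengths bounded below (again from \eqref{partassn}), and that $d(T_{\alpha_n,\beta_n},T_{\alpha_0,\beta_0})\to 0$ in a Skorohod-type metric. Your argument is instead direct and self-contained: the three-term decomposition using invariance of $f_{n_k}$ and the $L^1$-contraction property, with the operator-difference term $\|P_{\alpha_0,\beta_0}f_{n_k}-P_{\alpha_{n_k},\beta_{n_k}}f_{n_k}\|_1$ controlled through the explicit formula \eqref{FPdef} and the uniform BV bound inherited from (A). (In fact the paper's source contains a superseded direct proof in this same spirit, but there the operator difference is applied to the \emph{fixed} limit $f_0$, approximated by a $C^1$ function $f_\varepsilon$; your variant applies it to the \emph{moving} densities and trades the $C^1$ approximation for the uniform variation bound.) One point to tighten: the arguments inside $f_{n_k}$ in \eqref{FPdef} differ by a dilation, not a translation, so the estimate you need is $\int_0^c|f(ax)-f(bx)|\,dx\le C|a-b|\,Vf$ rather than the translation modulus you quote; this follows from the same BV machinery (integrate the total-variation measure of $f$ over the intervals with endpoints $ax$ and $bx$, and measure the set of $x$ for which a given point lies between them), so it is an imprecision, not a gap. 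As for what each route buys: citing Keller makes the published proof short and robust by delegating stability to a general theorem, while your argument is elementary, avoids the Skorohod metric entirely, and makes explicit how the uniform BV control of (A) is the real engine behind (B).
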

In a similar situation in \cite{[BB]} there is a direct reference to Theorem 10.3.2 of \cite{[BG]} but it seems that after a careful check, this reference is not applicable in the situation of the Markov approximations in \cite{[BB]}, neither in our case.

 Next we discuss what the problem is with the direct application of Theorem 10.3.2 then  we prove Proposition \ref{1032}. \\
The main problem of the direct application in \cite{[BB]} of the theorems from \cite{[BG]} to the case of approximations by skew tent maps is the following. In the assumptions of these theorems given a piecewise expanding transformation $T: I \rightarrow I$, a family $\{T_n\}_{n \geq 1}$ of approximating Markov transformations associated with $T$ is considered. 
Assume $\mathcal{Q}^{(0)}$ denotes the endpoints of intervals belonging to $\mathcal{P}^{(0)}$, where $\mathcal{P}^{(0)}$ is a partition such that $T$ is $C^1$ and expanding on the partition intervals of $\mathcal{P}^{(0)}$.

If one checks in Section 10.3, p. 217 of \cite{[BG]} the definition of the approximating Markov transformations associated with $T$ one can see that there is a sequence of partitions $\mathcal{P}^{(n)}$. It is supposed that the transformations $T_n$ are piecewise expanding and Markov transformations with respect to $\mathcal{P}^{(n)}$.

Moreover, in assumption (a) on p. 217 of \cite{[BG]} it is stated that if $J=[c,d]\in \mathcal{P}^{n}$ and $J \cap \mathcal{Q}^{(0)}=\emptyset$ then $T_n|_J$ is a $C^1$ monotonic function such that
\begin{equation}\label{endp}
T_n(c)=T(c), \quad T_n(d)=T(d)
\end{equation}
Assumption \eqref{endp} is clearly not satisfied if $(\alpha_n,\beta_n)\rightarrow(\alpha_0,\beta_0), \; (\alpha_n,\beta_n)\neq(\alpha_0,\beta_0)$, $T_n=T_{\alpha_n,\beta_n}$, $T=T_{\alpha_0,\beta_0}$ and $\mathcal{P}^{(n)}$ has subintervals $[c,d]$ which do not contain $0,\: \alpha_0$ or 1. This means that contrary to what is claimed by the authors of \cite{[BB]} Theorem 10.3.2 of \cite{[BG]}, cannot be applied directly to the case of Markov approximations they want to use. Our Proposition \ref{1032} can be used in their case as well. Moreover, it is also an advantage of our Proposition \ref{1032} that we do not assume that the approximating skew tent maps are Markov.
\begin{proof}[Proof of Proposition \ref{1032}.]
First we check that assumptions of Theorem 10.2.1 in \cite{[BG]} are satisfied by $T_{\alpha_n,\beta_n}$ and $T_{\alpha_0,\beta_0}$ given in Proposition \ref{1032}. First observe that by $(\alpha_n,\beta_n)\rightarrow (\alpha_0,\beta_0)$ 
\begin{equation}\label{*asskel1}
\text{we can choose ${\bm{\gamma}}>1$ such that $|T'_{\alpha_n,\beta_n}(x)|\geq {\bm{\gamma}}$}
\end{equation}
 for any $x$ where the derivative exists for any $n$,
 this implies condition (1) of Theorem 10.2.1 of \cite{[BG]}. Since $\frac{1}{|T'_{\alpha_n,\beta_n}|}$ is constant on $(0,\alpha_n)$ and $(\alpha_n,1)$, from $(\alpha_n,\beta_n) \rightarrow (\alpha_0,\beta_0)$ it clearly follows that there exists $W>0$ such that $V\left(\frac{1}{T'_{\alpha_n,\beta_n}}\right)\leq W$ for any $n \in \mathbb{N}$. This shows that condition (2) of Theorem 10.2.1 of \cite{[BG]} is also satisfied. Observe that by $(\alpha_n,\beta_n)\rightarrow (\alpha_0,\beta_0)$ the partitions $\mathcal{P}_n$ have the property that we can choose $\delta>0$ such that if $I \in \mathcal{P}_n$ then $T_{\alpha_n,\beta_n}|_I$ is one-to-one, $T_{\alpha_n,\beta_n}(I)$ is an interval and $\min_{I\in\mathcal{P}_n} \lambda(I)> \delta$. This is condition (3) of Theorem 10.2.1 of \cite{[BG]}.

Finally, \eqref{partassn} is assumption (4) of Theorem 10.2.1. Therefore this theorem is applicable to the sequence $T_{\alpha_n,\beta_n}$. This yields that conclusion (A) of our Proposition \ref{1032} holds true.

 The only thing which needs extra proof that in conclusion (B) the function $f_0$, which is the $L^1$ limit of the $T_{\alpha_{n_k},\beta_{n_k}}$ invariant densities $f_{n_k}$, is $P_{\alpha_0,\beta_0}$ invariant.

In an earlier version of our paper we gave a direct detailed proof of this fact. However, as the referee of our paper pointed out this is an immediate consequence of 14. Corollary and
15. Remark of \cite{[Keller]}. 
The assumptions above are quite similar to the ones we need  for them.
Indeed, by choosing a $k$ in \eqref{*asskel1} such that  $\ggg^{k}>3$ one can see that 
 from $(\aaa_{n},\bbb_{n})\to (\aaa_{0},\bbb_{0})$ and $(\aaa_{n},\bbb_{n})\in U$ it follows that there exists a $k$
 such that  $(T_{\aaa_{n},\bbb_{n}}^{k})'\geq 3>2$ for all $n$s.
 We also have $\sup_{n}||(T_{\aaa_{n},\bbb_{n}}^{k})'||_{\oo}<+\oo$
 and all $(T_{\aaa_{n},\bbb_{n}}^{k})'$ satisfy Lipschitz condition with the
 same constant (restricted to their intervals of monotonicity).
 Property (iii) of 15. Remark of \cite{[Keller]}
 states that 
$$\text{ $\inf\{ \lll(I): I$ is a maximal monotonicity interval for some $T_{\aaa_{n},\bbb_{n}}^{k} \}>0$}$$
is a consequence of assumption \eqref{partassn}.
Finally, from $(\aaa_{n},\bbb_{n})\to (\aaa_{0},\bbb_{0})$ it follows that 
$d(T_{\aaa_{n},\bbb_{n}},T_{\aaa_{0},\bbb_{0}})\to 0$ in the Skhorohod type metric given on p. 324 of \cite{[Keller]}, which is defined as follows, (we use the notation $T_{n}$ for $T_{\aaa_{n},\bbb_{n}}$ and $T_{0}$ for $T_{\aaa_{0},\bbb_{0}}$),
$$d(T_{n},T_{0}):=\inf \{ \eee>0:\exists A\sse [0,1],\exists \sss:[0,1]\to[0,1] \text{ s.t. }$$
$$\lll(A)>1-\eee,\ \sss \text{ is a diffeomorphism,}\  T_{n}|_{A}=T_{0}\circ \sss|_{A},\  \text{and}$$
$$\text{for all }x\in A: |\sss(x)-x|<\eee,\ |(1/\sss'(x))-1|<\eee\}.$$
 \end{proof}
The next lemma shows that if $T_{\alpha_0,\beta_0}$ is non-Markov, that is $K(\alpha_0,\beta_0)\in \mathfrak{M}_{\infty}$ then \eqref{partassn} is satisfied.
\begin{lemma}\label{lempart}
Suppose $(\alpha_0,\beta_0) \in  U,\; K(\alpha_0,\beta_0)=\underline{M} \in \mathfrak{M}_{\infty}.$ The sequence $(\alpha_n,\beta_n) \rightarrow (\alpha_0, \beta_0), \; (\alpha_n,\beta_n) \in  U, \; \mathcal{P}_n=\{[0,\alpha_n],[\alpha_n,1]\}, \; n=0,1, \dots$ then \eqref{partassn} is satisfied.

\end{lemma}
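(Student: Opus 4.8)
The plan is to deduce \eqref{partassn} from a single explicit geometric estimate on the length of the shortest interval of $\mathcal{P}_n^{(m)}$, uniform in $n$. Write $c_l^{(n)}=T^l_{\alpha_n,\beta_n}(\alpha_n)$ for the critical orbit of the $n$-th map, put $s_n=\max\{\beta_n/\alpha_n,\ \beta_n/(1-\alpha_n)\}$ for its largest slope, and set $\rho_{m-1}^{(n)}=\min_{1\le l\le m-1}|c_l^{(n)}-\alpha_n|$, the closest approach of the first $m-1$ critical orbit points to the turning point. First I would prove the key estimate
\[
\min_{I\in\mathcal{P}_n^{(m)}}\lambda(I)\ \ge\ \frac{\min\bigl(\alpha_n,\,1-\alpha_n,\,\rho_{m-1}^{(n)}\bigr)}{s_n^{\,m-1}},
\]
which isolates the three ways an interval can be forced to be short: the turning point sitting near $0$, near $1$, or near an earlier point of its own orbit.

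For the estimate I would use the nested structure of the partitions. Since $\mathcal{P}_n^{(k)}=\mathcal{P}_n^{(k-1)}\vee T^{-(k-1)}_{\alpha_n,\beta_n}(\mathcal{P}_n)$, the refinement from level $k-1$ to level $k$ adds only the breakpoints in $T^{-(k-1)}_{\alpha_n,\beta_n}(\alpha_n)$; because $T^{k-1}_{\alpha_n,\beta_n}$ is a monotone homeomorphism on each $\tilde I\in\mathcal{P}_n^{(k-1)}$, each such $\tilde I$ contains at most one new breakpoint and hence is split into at most two pieces. If $\tilde I$ is split, then $T^{k-1}_{\alpha_n,\beta_n}(\tilde I)$ is an interval with $\alpha_n$ in its interior whose endpoints lie in $\{0\}\cup\{c_1^{(n)},\dots,c_{k-1}^{(n)}\}$ (the turning values of $T^{k-1}_{\alpha_n,\beta_n}$, the endpoints $0,1$ of $[0,1]$ both mapping to the fixed point $0$), so each of the two image pieces has length at least $\min(\alpha_n,1-\alpha_n,\rho_{k-1}^{(n)})$; pulling back through the $k-1$ affine branches, each of slope at most $s_n$, divides lengths by at most $s_n^{k-1}$. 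An interval surviving in $\mathcal{P}_n^{(m)}$ is born by such a split at some level $b\le m$ and is not cut afterwards, so it inherits the bound with $k=b$; since the $\rho^{(n)}$'s are non-increasing and $s_n\ge 1$, the worst case $b=m$ gives the displayed estimate.

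Finally I would pass to the limit. Joint continuity of $(\alpha,\beta,x)\mapsto T_{\alpha,\beta}(x)$ — which holds even at the turning point, both one-sided values there being $\beta$ — gives $c_l^{(n)}\to c_l^{(0)}=T^l_{\alpha_0,\beta_0}(\alpha_0)$ for each fixed $l$, and also $s_n\to s_0<\infty$ and $\min(\alpha_n,1-\alpha_n)\to\min(\alpha_0,1-\alpha_0)>0$. Because $\underline{M}=K(\alpha_0,\beta_0)\in\mathfrak{M}_\infty$, the turning point of $T_{\alpha_0,\beta_0}$ is not periodic and no $C$ occurs in $\underline{M}$, so $c_l^{(0)}\neq\alpha_0$ for every $l\ge 1$; hence $\rho_{m-1}^{(0)}>0$ and $\rho_{m-1}^{(n)}\to\rho_{m-1}^{(0)}$. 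Thus the right-hand side of the estimate has a positive limit and is bounded below by a positive constant for all $n\ge N$; this automatically rules out, for large $n$, any periodicity of $\alpha_n$ of period $\le m-1$, which would force $\rho_{m-1}^{(n)}=0$. For the finitely many $n<N$ each $\min_I\lambda(I)>0$, since $\mathcal{P}_n^{(m)}$ is a finite partition into non-degenerate intervals, and taking $\delta_m$ to be the minimum of these finitely many positive numbers together with the uniform bound yields \eqref{partassn}.

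The main obstacle is the key estimate, and specifically recognizing that the quantity controlling shortness is the closest approach $\rho_{m-1}^{(n)}$ of the critical orbit to the turning point (together with the distances $\alpha_n,1-\alpha_n$ to the endpoints), rather than gaps between critical orbit points or proximity to the fixed point $0$. The expansion of $T_{\alpha_n,\beta_n}$ gives upper bounds on lengths for free but fights against lower bounds, so the observation that each refinement adds at most one breakpoint per interval is exactly what turns the pull-back into a genuine lower bound.
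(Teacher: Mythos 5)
Your proof is correct, and it takes a genuinely different route from the paper's. The paper argues through the \emph{backward} orbit: the division points of $\mathcal{P}_n^{(m)}$ are $0$, $1$ and the preimages $T^{-j}_{\alpha_n,\beta_n}(\alpha_n)$, $0\le j\le m-1$; non-periodicity of the turning point of $T_{\alpha_0,\beta_0}$ (the paper's \eqref{ILB24*b} and \eqref{ILB24*a}) makes the division points of the limiting partition $\mathcal{P}_0^{(m)}$ pairwise distinct, with minimal gap $\delta_{0,m}>0$, and the paper then selects $N_m$ so that the division-point sets $\mathcal{Q}_n^{(m)}$ are Hausdorff-close to $\mathcal{Q}_0^{(m)}$, concluding $\min_{I\in\mathcal{P}_n^{(m)}}\lambda(I)\ge\delta_{0,m}/3$ for $n\ge N_m$; the finitely many remaining $n$ are handled exactly as you handle them. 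You instead work with the \emph{forward} orbit: endpoints of elements of $\mathcal{P}_n^{(k-1)}$ are mapped by $T^{k-1}_{\alpha_n,\beta_n}$ into $\{0\}\cup\{c_1^{(n)},\dots,c_{k-1}^{(n)}\}$ (indeed $T_{\alpha_n,\beta_n}(0)=T_{\alpha_n,\beta_n}(1)=0$), so every newly created piece has image length at least $\min\bigl(\alpha_n,\rho_{k-1}^{(n)}\bigr)$, and pulling back through the affine branch of slope at most $s_n^{k-1}$ gives your explicit lower bound. Your approach buys an explicit, uniform $\delta_m$ and only needs convergence of finitely many forward orbit points $c_l^{(n)}\to c_l^{(0)}$, which is immediate from joint continuity; by contrast, the paper's Hausdorff step is stated tersely and actually leaves something implicit: Hausdorff closeness of $\mathcal{Q}_n^{(m)}$ to $\mathcal{Q}_0^{(m)}$ does not by itself prevent two division points of $\mathcal{P}_n^{(m)}$ from nearly colliding, both close to the same point of $\mathcal{Q}_0^{(m)}$ --- this is what would happen if a branch of an iterate of $T_{\alpha_n,\beta_n}$ crossed the level $\alpha_n$ twice near a local extremum whose unperturbed value just misses $\alpha_0$ --- and ruling that out again comes down to comparing the extremal values of the iterates, i.e.\ the critical orbit points, with $\alpha_n$, which is exactly the quantity $\rho_{m-1}^{(n)}$ your estimate isolates. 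What the paper's route buys is brevity, avoiding your level-by-level induction and birth-level bookkeeping. Two harmless details in your write-up: the term $1-\alpha_n$ in your minimum is needed only at the base level $k=1$ (since $1$ never occurs as an endpoint image; keeping it merely weakens the bound), and your positivity claim for the finitely many $n<N$ rests on the same convention the paper uses, namely that $\mathcal{P}_n^{(m)}$ is the collection of non-degenerate intervals between consecutive distinct division points --- worth stating explicitly, since for those $n$ the turning point $\alpha_n$ may well be periodic.
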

\begin{proof}
Since $\underline{M} \in \mathfrak{M}_{\infty}$  we have $T^{k+1}_{\alpha_0,\beta_0}(\alpha_0)=T^k_{\alpha_0,\beta_0}(\beta_0) \neq \alpha_0$ for $k=0,1, \dots .$
This implies that
\begin{equation}\label{ILB20*aa}
T^{k}_{\alpha_0,\beta_0}(\alpha_0)\neq T^{k'}_{\alpha_0,\beta_0}(\alpha_0) \text{ if } k' > k \geq 0.
\end{equation}
Observe that the division points of $\mathcal{P}_n^{(m)}, \; (n=0,1, \dots)$ are $0,1$ and points of the form $T^{-j}_{\alpha_n,\beta_n}(\alpha_n)$ with  $0 \leq j \leq m-1$. Denote the set of division points of $\mathcal{P}_n^{(m)}$ by $\mathcal{Q}_n^{(m)}$.
By \eqref{ILB20*aa} we have
\begin{equation}\label{ILB24*b}
\alpha_0 \notin T^{-j}_{\alpha_0,\beta_0}(\alpha_0) \text{ for any } j=1,2, \dots
\end{equation}
and in general
\begin{equation}\label{ILB24*a}
 T^{-j'}_{\alpha_0,\beta_0}(\alpha_0) \text{ and } T^{-j}_{\alpha_0,\beta_0}(\alpha_0) \text{ are disjoint finite sets for } j' \neq j.
 \end{equation}
 Indeed, if we had for a $j'>j\geq1, \; x \in T^{-j'}_{\alpha_0,\beta_0}(\alpha_0) \:\cap\: T^{-j}_{\alpha_0,\beta_0}(\alpha_0)$ then $$T^{j'-j}_{\alpha_0,\beta_0}(T^{j}_{\alpha_0,\beta_0}(x))=\alpha_0=T^{j}_{\alpha_0,\beta_0}(\alpha_0)$$ and hence $T^{j'-j}_{\alpha_0,\beta_0}(\alpha_0)=\alpha_0,$ which contradicts \eqref{ILB20*aa}. \\
 Denote by $\delta_{0,m}$ the length of the shortest interval in $\mathcal{P}_0^{(m)}$. By using $\alpha_n \rightarrow \alpha_0, \: \beta_n \rightarrow \beta_0$, \eqref{ILB24*b} and \eqref{ILB24*a} we can select $N_m$ such that
 \begin{equation}\label{ILB25*a}
 \disthau (\mathcal{Q}_n^{(m)},\mathcal{Q}_0^{(m)})<\delta_{0,m}/3 \text{ holds for } n \geq N_m.
\end{equation}
This implies that $ \min_{I \in\mathcal{P}_n^{(m)}} \lambda(I) \geq \delta_{0,m}/3>0$ holds for $n \geq N_m$. Since $\min\{\lambda(I):I \in \mathcal{P}_n^{(m)}, \; n \leq N_m\}>0$ we obtain that \eqref{partassn} is satisfied.
\end{proof}
Finally, in this section we make a few remarks about the Lipschitz property of the isentropes. By Theorem A of \cite{[MV]} if $\mu'>\mu$ and $\lambda'>\lambda$ then the topological entropy of $F_{\lambda',\mu'}$ is larger than that of $F_{\lambda,\mu}$. 
Recalling that $\lambda=\frac{\beta}{\alpha}$ and $\mu=\frac{\beta}{1-\alpha}$ we obtain that if the isentrope $\{(\alpha, \Psi_{\underline{M}}(\alpha)):\alpha \in (\alpha_1(\underline{M}),\alpha_2(\underline{M}))\}$ is passing through the point $(\alpha_0,\beta_0)=(\alpha_0,\Psi_{\underline{M}}(\alpha_0))$ then
\begin{equation}\label{ILD1*a}
\frac{\Psi_{\underline{M}}(\alpha)-\Psi_{\underline{M}}(\alpha_0)}{\alpha-\alpha_0} \leq \frac{\beta_0}{\alpha_0} \text{ for } \alpha>\alpha_0
\end{equation}
and
\begin{equation}\label{ILD1*b}
\frac{\Psi_{\underline{M}}(\alpha)-\Psi_{\underline{M}}(\alpha_0)}{\alpha-\alpha_0} \geq -\frac{\beta_0}{1-\alpha_0} \text{ for } \alpha<\alpha_0.
\end{equation}
Now suppose that we selected an interval $[\overline{\alpha}_1,\overline{\alpha}_2] \subset (\alpha_1(\underline{M}),\alpha_2(\underline{M}))$. Then we can choose a constant $\overline{B}>0$ for which
$$\frac{\Psi_{\underline{M}}(\alpha)-\Psi_{\underline{M}}(\alpha_0)}{\alpha-\alpha_0} \leq \overline{B}
\text{ if $\alpha>\alpha_0, \: \alpha, \alpha_0 \in [\overline{\alpha}_1,\overline{\alpha}_2]$,}
$$  and \\
$$\frac{\Psi_{\underline{M}}(\alpha)-\Psi_{\underline{M}}(\alpha_0)}{\alpha-\alpha_0} \geq -\overline{B}\text{ if $\alpha < \alpha_0, \; \alpha, \alpha_0 \in [\overline{\alpha}_1,\overline{\alpha}_2].$}$$ 

This implies that we proved the following:
\begin{proposition}\label{LIP}
Suppose $\underline{M} \in \mathfrak{M}$ and $[\overline{\alpha}_1,\overline{\alpha}_2] \subset(\alpha_1(\underline{M}), \alpha_2(\underline{M})).$ Then there exists a $\overline{B}$ such that
\begin{equation}\label{*Lipb}
\left|\frac{\Psi_{\underline{M}}(\alpha_1)-\Psi_{\underline{M}}(\alpha_2)}{\alpha_1-\alpha_2}\right| \leq \overline{B}
\end{equation}
 if $\alpha_1, \alpha_2 \in [\overline{\alpha}_1,\overline{\alpha}_2]$, that is $\Psi_{\underline{M}}$ is Lipschitz on $[\overline{\alpha}_1,\overline{\alpha}_2]$ and hence is absolutely continuous on $[\overline{\alpha}_1,\overline{\alpha}_2]$, $\Psi_{\underline{M}}'$ exists almost everywhere on $[\overline{\alpha}_1,\overline{\alpha}_2]$ and for any $\alpha_1,\alpha_2 \in [\overline{\alpha}_1,\overline{\alpha}_2], \: \alpha_1<\alpha_2$ we have $\Psi_{\underline{M}}(\alpha_2)-\Psi_{\underline{M}}(\alpha_1)=\int_{\alpha_1}^{\alpha_2} \Psi_{\underline{M}}'(\alpha) d\alpha.$
\end{proposition}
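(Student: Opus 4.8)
The plan is to deduce the two-sided Lipschitz estimate \eqref{*Lipb} from the one-sided difference-quotient bounds \eqref{ILD1*a} and \eqref{ILD1*b} established just above, and then to invoke the standard real-analysis consequences of the Lipschitz property. The bounds \eqref{ILD1*a} and \eqref{ILD1*b} themselves rest on the strict monotonicity of the topological entropy in the $(\lambda,\mu)$-parametrization (Theorem A of \cite{[MV]}): along an isentrope the entropy is constant, so passing from $(\alpha_0,\beta_0)$ to a nearby point $(\alpha,\Psi_{\underline{M}}(\alpha))$ on the same curve cannot simultaneously increase both $\lambda=\beta/\alpha$ and $\mu=\beta/(1-\alpha)$. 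For $\alpha>\alpha_0$ one checks that $\lambda>\lambda_0$ would force $\beta>\beta_0$ and hence, since $1-\alpha<1-\alpha_0$, also $\mu>\mu_0$, a contradiction; thus $\beta/\alpha\le\beta_0/\alpha_0$, which rearranges to \eqref{ILD1*a}. For $\alpha<\alpha_0$ the symmetric argument gives $\beta/(1-\alpha)\le\beta_0/(1-\alpha_0)$, which rearranges to \eqref{ILD1*b}.

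Next I would combine the two one-sided estimates. Fix $\alpha_1,\alpha_2\in[\overline{\alpha}_1,\overline{\alpha}_2]$ with $\alpha_1<\alpha_2$. Taking $\alpha_0=\alpha_1$ and $\alpha=\alpha_2$ in \eqref{ILD1*a} yields the upper bound $\frac{\Psi_{\underline{M}}(\alpha_2)-\Psi_{\underline{M}}(\alpha_1)}{\alpha_2-\alpha_1}\le\frac{\Psi_{\underline{M}}(\alpha_1)}{\alpha_1}$, while taking $\alpha_0=\alpha_2$ and $\alpha=\alpha_1$ in \eqref{ILD1*b} yields the matching lower bound $\frac{\Psi_{\underline{M}}(\alpha_2)-\Psi_{\underline{M}}(\alpha_1)}{\alpha_2-\alpha_1}\ge-\frac{\Psi_{\underline{M}}(\alpha_2)}{1-\alpha_2}$. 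Since $[\overline{\alpha}_1,\overline{\alpha}_2]$ is a compact subinterval of $(\alpha_1(\underline{M}),\alpha_2(\underline{M}))$, the coordinate $\alpha$ stays bounded away from $0$ and $1$ there, and $\Psi_{\underline{M}}$ is continuous by Theorem \ref{MVab}, hence bounded on it; therefore $\overline{B}:=\sup_{\alpha\in[\overline{\alpha}_1,\overline{\alpha}_2]}\max\{\,\Psi_{\underline{M}}(\alpha)/\alpha,\ \Psi_{\underline{M}}(\alpha)/(1-\alpha)\,\}$ is finite and dominates both sides, which is exactly \eqref{*Lipb}.

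Finally, \eqref{*Lipb} says precisely that $\Psi_{\underline{M}}$ is Lipschitz on $[\overline{\alpha}_1,\overline{\alpha}_2]$, and the remaining assertions (absolute continuity, almost-everywhere differentiability, and the representation $\Psi_{\underline{M}}(\alpha_2)-\Psi_{\underline{M}}(\alpha_1)=\int_{\alpha_1}^{\alpha_2}\Psi_{\underline{M}}'(\alpha)\,d\alpha$) are standard consequences of the Lipschitz property and need no further argument. The only genuine content lies in passing through the entropy monotonicity of \cite{[MV]} to obtain \eqref{ILD1*a}--\eqref{ILD1*b}; once these slope bounds are in hand the Lipschitz estimate and its consequences are immediate. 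I expect the main (still modest) obstacle to be bookkeeping the directions of the inequalities when translating the $(\lambda,\mu)$-monotonicity into $(\alpha,\beta)$-slope bounds, since increasing $\alpha$ raises $\mu$ but lowers $\lambda$, so the constant-entropy constraint must be read off separately and carefully in the two cases $\alpha>\alpha_0$ and $\alpha<\alpha_0$.
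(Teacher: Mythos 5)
Your proposal is correct and follows essentially the same route as the paper: the one-sided difference-quotient bounds \eqref{ILD1*a}--\eqref{ILD1*b} are obtained from the strict entropy monotonicity of Theorem A of \cite{[MV]} (constant entropy along the isentrope forbids $\lambda=\beta/\alpha$ and $\mu=\beta/(1-\alpha)$ from increasing simultaneously), and the two-sided Lipschitz bound \eqref{*Lipb} then follows by applying these estimates with the roles of the two points exchanged and using compactness of $[\overline{\alpha}_1,\overline{\alpha}_2]$ together with continuity of $\Psi_{\underline{M}}$. Your write-up in fact makes explicit the endpoint-swapping step and the choice of $\overline{B}$ that the paper leaves implicit, and the concluding real-analysis consequences are standard, as you say.
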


\begin{remark}\label{*remlip}
From \eqref{ILD1*a} and \eqref{ILD1*b} it is also clear that we have a locally uniform 
Lipschitz property of the isentropes. This means that if $(\aaa_{0},\bbb_{0})\in U$
and $[\aaa_{0}-\ddd,\aaa_{0}+\ddd]\xx [\bbb_{0}-\ddd,\bbb_{0}+\ddd]
\sse U$ then one can choose $\overline{B}$ such that for any 
$\aaa_{1},\aaa_{2}\in U$ if $\Psi_{\underline{M}}(\alpha_1),\Psi_{\underline{M}}(\alpha_2)\in [\aaa_{0}-\ddd,\aaa_{0}+\ddd]\xx [\bbb_{0}-\ddd,\bbb_{0}+\ddd]$ then we have \eqref{*Lipb}.
\end{remark}

\end{section}
\section{Isentropes and Lyapunov exponents, the Markov case}\label{secismar}
First we establish Theorem \ref{thlamark} for the Markov case with an additonal differentiability assumption of the isentrope.
\begin{proposition}\label{prlanmark}
Suppose $(\alpha_0,\beta_0) \in  U,\: \underline{M}=K(\alpha_0,\beta_0) \in \mathfrak{M}_{< \infty}$, that is there exists a minimal $n_{\underline{M}}>1$ such that $T^{n_{\underline{M}}}_{\alpha_0,\beta_0}(\beta_0)=\alpha_0$. Assume that $\Lambda=\Lambda_{\alpha_0,\beta_0}$ denotes the Lyapunov exponent of $T_{\alpha_0,\beta_0}$ and $(\alpha,\Psi_{\underline{M}}(\alpha))$ is the isentrope satisfying $\beta_0=\Psi_{\underline{M}}(\alpha_0)$. We also suppose that $\Psi'_{\underline{M}}(\alpha_0)$ exists, that is the isentrope is differentiable at $\alpha_0$.
 Moreover \eqref{IL1*a}, \eqref{IL1*b}  and \eqref{ILE2*a} are satisfied.
\end{proposition}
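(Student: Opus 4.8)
\textbf{Reduction to the geometric formula \eqref{IL1*b}.} First I would observe that \eqref{IL1*a} and \eqref{ILE2*a} carry no new content beyond what is already recorded in \eqref{IL3*aa}. Indeed, $T_{\alpha_0,\beta_0}$ has a unique ergodic acim $\mu$ (Theorem \ref{th521}, Lemma \ref{unique}), so Birkhoff's theorem applied to $\chi_{[0,\alpha_0]}$ identifies the time average $\gamma$ of \eqref{IL3*aa} with $\mu([0,\alpha_0])$ for a.e.\ $x$, which is \eqref{ILE2*a}; substituting this $\gamma$ into \eqref{IL3*aa} gives \eqref{IL1*a}. Hence the entire problem reduces to proving \eqref{IL1*b}, i.e.\ to the single identity
\[
\Psi_{\underline{M}}'(\alpha_0)=\frac{\beta_0\,(\gamma-\alpha_0)}{\alpha_0(1-\alpha_0)},\qquad \gamma=\mu([0,\alpha_0]),
\]
which links the slope of the isentrope to the invariant measure.

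\textbf{The derivative side.} Since $\underline{M}\in\mathfrak{M}_{<\infty}$, the turning point is periodic; set $x_k=T^k_{\alpha,\beta}(\beta)$, so that $x_0=\beta$ and, for parameters on the isentrope, the itinerary of the critical orbit is frozen and the defining relation $x_{n_{\underline{M}}}(\alpha)\equiv\alpha$ holds identically. I would differentiate this identity along $\beta=\Psi_{\underline{M}}(\alpha)$ at $\alpha_0$ (legitimate by the standing hypothesis that $\Psi_{\underline{M}}'(\alpha_0)$ exists). Writing $\sigma_k=T'_{\alpha,\beta}(x_k)$ for the \emph{signed} slope ($\beta/\alpha$ on an $L$-step, $-\beta/(1-\alpha)$ on an $R$-step), a direct chain-rule computation gives the linear recursion
\[
\tfrac{d}{d\alpha}x_{k+1}=\sigma_k\,\tfrac{d}{d\alpha}x_k+b_{k+1},\qquad \tfrac{d}{d\alpha}x_0=\Psi_{\underline{M}}'(\alpha),
\]
where each $b_{k+1}$ is $x_{k+1}$ times an explicit combination of $\Psi_{\underline{M}}'/\beta$ with $-1/\alpha$ or $+1/(1-\alpha)$. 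Solving the recursion and imposing $\frac{d}{d\alpha}x_{n_{\underline{M}}}=1$ yields one linear equation for $\Psi_{\underline{M}}'(\alpha_0)$ whose coefficients are orbit-sums in the factors $1/|(T^k)'(\beta_0)|=\ell^{\#L}r^{\#R}$, with $\ell=\alpha_0/\beta_0$, $r=(1-\alpha_0)/\beta_0$. The organizing quantity here is $S_k=x_k/(T^k)'(\beta_0)$, which is \emph{constant} across $L$-steps and \emph{drops} by $1/(T^k)'(\beta_0)$ across $R$-steps; this telescoping is exactly what ties the computation to the partial sums of the series \eqref{02261a}, reconciling the outcome with the formula $\Psi_{\underline{M}}'=-\partial_1\Theta_{\underline{M}}/\partial_2\Theta_{\underline{M}}$ of \eqref{*implder}.

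\textbf{The measure side and the matching.} Independently I would compute $\gamma=\mu([0,\alpha_0])$ from the Markov structure: on the partition determined by $\{0,\alpha_0,\beta_0,T_{\alpha_0,\beta_0}(\beta_0),\dots,1\}$ the invariant density is piecewise constant, so $P_{\alpha_0,\beta_0}f=f$ in \eqref{FPdef} becomes the finite linear system $f_i=\sum_{j\to i}f_j/|\sigma_j|$ and $\gamma=\sum_{J_i\subset[0,\alpha_0]}f_i\,\lambda(J_i)$. The point is that, after normalization, this too collapses to an orbit-sum in the \emph{same} weights $\ell^{\#L}r^{\#R}$, so that the total mass of the unnormalized density cancels between the coefficient $1+\frac1{\beta_0}\sum_k S_k$ on the derivative side and the numerator $\int_0^{\alpha_0}(\cdot)$ that produces the factor $\gamma-\alpha_0$. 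I expect the genuine obstacle to lie precisely here: producing the invariant density in closed orbit-sum form and, above all, correctly tracking the signs forced by the orientation-reversing $R$-branch (the alternation $(-1)^k$ of \eqref{02261a}), and then verifying the telescoping identity that collapses the two orbit-sums into the single ratio $\beta_0(\gamma-\alpha_0)/(\alpha_0(1-\alpha_0))$. Once this bookkeeping is in place, \eqref{IL1*b} follows by dividing the two expressions, and with the reduction above the proof of the proposition is complete.
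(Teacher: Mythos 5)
Your reduction is fine: uniqueness and ergodicity of the acim (Lemma \ref{unique}) plus Birkhoff applied to $\chi_{[0,\alpha_0]}$ do give \eqref{IL1*a} and \eqref{ILE2*a} from \eqref{IL3*aa}, so the proposition is equivalent to \eqref{IL1*b}; the paper makes the same reduction. Your derivative-side setup is also correct as far as it goes: along the isentrope the itinerary is frozen, the chain-rule recursion for $\frac{d}{d\alpha}x_k$ is right, and the quantity $S_k=x_k/(T^k)'(\beta_0)$ does stay constant on $L$-steps and drop by $1/(T^k)'(\beta_0)$ on $R$-steps, which is indeed the combinatorial structure behind \eqref{02261a}. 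The genuine gap is exactly where you say you "expect the obstacle to lie": the matching of the two orbit sums \emph{is} the statement \eqref{IL1*b}, and you never prove it. Two substantive claims are left unestablished: (i) that the solution of the Frobenius--Perron linear system \eqref{FPdef} on the Markov partition admits a closed form as an orbit sum in the same weights $\ell^{\#L}r^{\#R}$ (for a $k$-cell Markov map this is a nontrivial theorem of kneading-determinant type, not a routine consequence of "piecewise constant density"), and (ii) that the resulting expression for $\gamma-\alpha_0$ telescopes against the coefficient of $\Psi'_{\underline{M}}$ coming from imposing $\frac{d}{d\alpha}x_{n_{\underline{M}}}=1$, with all signs from the orientation-reversing branch coming out right. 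Since the whole content of the proposition is this identity, deferring it means the proposal is a programme, not a proof.

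It is worth contrasting this with how the paper gets around the bookkeeping entirely: it never computes either side. Setting $\widehat{\gamma}(\alpha_0)=\alpha_0(1-\alpha_0)\Psi'_{\underline{M}}(\alpha_0)/\beta_0+\alpha_0$, it assumes $\widehat{\gamma}(\alpha_0)\neq\gamma$ and derives a contradiction from a length count. The function $s(\alpha,t)=\Psi_{\underline{M}}(\alpha)\alpha^{-t}(1-\alpha)^{-(1-t)}$ has $\partial_1 s(\alpha_0,t)=s(\alpha_0,t)(\widehat{\gamma}(\alpha_0)-t)\left(\frac{1}{\alpha_0}+\frac{1}{1-\alpha_0}\right)$, so $\partial_1 s(\alpha_0,\gamma)\neq 0$; choosing $\Delta\alpha$ of the appropriate sign makes the slope $(s(\alpha_0+\Delta\alpha,g_l))^N$ of $T^N$ on every monotonicity interval meeting a positive-measure set of Birkhoff-typical points smaller by a factor $1+\frac{N}{2}|\widehat{\gamma}(\alpha_0)-\gamma||\Delta\alpha|$, while the Markov structure gives a Lipschitz bound, uniform in $N$, on the motion of the finitely many endpoints $c_i(\alpha)$, so the images of those intervals shrink by at most a factor $2$. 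For $N$ large the perturbed monotonicity intervals would then have total length exceeding $1$, which is impossible. This uses only ergodicity, Proposition \ref{LIP}, and the finiteness of the critical orbit. Your route, if the closed-form density and the telescoping identity were actually supplied, would be a genuinely different, more explicit argument (and would tie \eqref{IL1*b} directly to \eqref{*implder}); but as submitted the decisive step is missing.
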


\begin{proof}
Since $\underline{M} \in \mathfrak{M}_{<\infty}$ we know that $\{T_{\alpha_0,\beta_0}^{n}(\alpha_0):n \in \N \}$ is a finite set
which has $k=n_{\underline{M}}+1$ many elements. We denote this finite set by $c_1<c_2< \dots < c_k$. Then $T^k_{\alpha_0,\beta_0}(\alpha_0)=\alpha_0, \: c_1=T_{\alpha_0,\beta_0}(\beta_0), \: c_k=\beta_0$ and $[c_1,c_k]$ is the dynamical core of the dynamical system $([0,1], T_{\alpha_0,\beta_0})$. The orbit of any $x \in (0,1)$ enters $[c_1,c_k]$ and then for higher iterates $T^n_{\alpha_0,\beta_0}(x)$ stays in this interval. \\
Moreover, since $T_{\alpha_0,\beta_0}([c_1,c_k])=[c_1,c_k]$ we can study the restriction of $T_{\alpha_0,\beta_0}$ onto $[c_1,c_k]$,
which for ease of notation is still denoted by $T_{\alpha_0,\beta_0}$. \\
Since $\mu$ can be obtained as the weak limit of a subsequence of the measures $\frac{1}{N} \sum_{n=0}^{N-1}\bm{\delta}_{T^n_{\alpha_0,\beta_0}(x)}$ for $\mu$ almost every $x$, it is clear that the support of $\mu$ is a subset of $[c_1,c_k]$.
(Recall that $\bm{\ddd}_{x}$ is the Dirac measure centred on $x$.)
 By Proposition \ref{unique}, $\mu$ is unique and ergodic.
By \eqref{IL3*aa}, $\gamma$ in \eqref{IL1*a} satisfies \eqref{ILE2*a} and by Birkhoff's ergodic theorem
\begin{equation}\label{IL3*a}
\gamma= \lim_{N \rightarrow \infty} \frac{1}{N} \sum_{n=0}^{N-1} \chi_{[0,\alpha_0]}(T^n_{\alpha_0,\beta_0}(x))=\mu([0,\aaa_{0}])
\end{equation}
holds for $\mu$ almost every $x$. Since $\mu$ is absolutely continuous with respect to the Lebesgue measure the set $S_{\gamma}$ which consist of those $x$ for which \eqref{IL3*a} holds is of positive Lebesgue measure. It is also well-known, and is easy to check, that the partition $\mathcal{P}_{\alpha_0}=\{[c_1,c_2], \dots, [c_{k-1},c_k]\}$ is a Markov partition of the dynamical core $[c_1,c_k]$. \\
We select $\overline{\alpha}_1<\overline{\alpha}_2$ such that $\alpha_0 \in (\overline{\alpha}_1,\overline{\alpha}_2)\sse [\overline{\alpha}_1,\overline{\alpha}_2]\subset (\alpha_1(\underline{M}),\alpha_2(\underline{M}))$. Since $\Psi_{\underline{M}}$ is an isentrope,
 the maps $T_{\alpha,\Psi_{\underline{M}}(\aaa)}$ are topologically conjugate, 
\begin{equation}\label{IL4*a}
  T^k_{\alpha,\Psi_{\underline{M}}(\alpha)}(\alpha)=\alpha \text{ holds for } \alpha \in [\overline{\alpha}_1, \overline{\alpha}_2],
\end{equation}
and the dynamical systems $T_{\alpha,\Psi_{\underline{M}}(\alpha)}$ are also Markov with Markov partitions $\mathcal{P}_{\alpha}=\{[c_1(\alpha),c_2(\alpha)], \dots, [c_{k-1}(\alpha), c_k(\alpha)]\}$ where $c_i(\alpha)=T^{n_i}_{\alpha,\Psi_{\underline{M}}(\alpha)}(\alpha)$ 
with $n_i < k$ not depending on $\alpha$.
 By Proposition \ref{LIP} and by topological conjugacy of the maps $T_{\alpha,\Psi_{\underline{M}}(\aaa)}, \: \alpha \in [ \overline{\alpha}_1, \overline{\alpha}_2]$ the functions $c_i(\alpha), \: i=1, \dots, k$ are Lipschitz on $[ \overline{\alpha}_1, \overline{\alpha}_2]$.
Moreover, we can choose $M_c>0$ such that
\begin{equation}\label{IL6*a}
  |c_i(\alpha_1)-c_i(\alpha_2)| \leq M_c|\alpha_1-\alpha_2| \text{ for } \alpha_1, \alpha_2 \in [ \overline{\alpha}_1, \overline{\alpha}_2] \text{ and } i=1, \dots,k.
\end{equation}
We denote by $\bm{\varDelta}_c$ the minimum distance among the points $c_i=c_i(\alpha_0), \: i=1, \dots,k$ that is
\begin{equation}\label{IL6*b}
\bm{\varDelta}_c=\min\{c_{i+1}-c_i: i=1, \dots k-1\}.
\end{equation}

Next, proceeding towards a contradiction we suppose that $\gamma$ 
defined in \eqref{IL3*a} does not satisfy \eqref{IL1*b}. By Proposition \ref{LIP}, $\Psi_{\underline{M}}$ is a Lipschitz function on $[ \overline{\alpha}_1, \overline{\alpha}_2]$. Hence $\Psi'_{\underline{M} }(\alpha)$ exists almost everywhere on $[ \overline{\alpha}_1, \overline{\alpha}_2]$ and we can put
\begin{equation}\label{IL7*a}
\widehat{\gamma}(\alpha)=\alpha(1-\alpha) \frac{\Psi'_{\underline{M}}(\alpha)}{\Psi_{\underline{M}}(\alpha)}+\alpha \text{ for $\lll $ a.e. } \alpha \in [\overline{\alpha}_1, \overline{\alpha}_2].
\end{equation}
Since $\Psi_{\underline{M}}(\alpha_0)=\beta_0$ our assumption that $\gamma$ does not satisfy \eqref{IL1*b} can be written in the form $\widehat{\gamma}(\alpha_0)\neq\gamma$. Recall that we supposed that $\Psi'_{\underline{M}}(\alpha_0)$ exists and hence $\widehat{\gamma}(\alpha_0)$ is well defined.
Moreover $\Psi_{\underline{M}}(\alpha_0)=\beta_0$ and \eqref{IL7*a} imply
\begin{equation}\label{psima}
\frac{\widehat{\gamma}(\alpha_0)-\alpha_0}{\alpha_0(1-\alpha_0)}=\frac{\Psi'_{\underline{M}}(\alpha_0)}{\beta_0}, \text{ that is }
0= \frac{\Psi'_{\underline{M}}(\alpha_0)}{\beta_0} -\frac{\widehat{\gamma}(\alpha_0)}{\alpha_0}+\frac{1-\widehat{\gamma}(\alpha_0)}{1-\alpha_0},
\end{equation}
since
$$\frac{\Psi'_{\underline{M}}(\alpha_0)}{\beta_0}-\frac{\widehat{\gamma}(\alpha_0)}{\alpha_0}+\frac{1-\widehat{\gamma}(\alpha_0)}{1-\alpha_0}$$
$$=\frac{\Psi'_{\underline{M}}(\alpha_0)}{\beta_0}+\frac{(1-\widehat{\gamma}(\alpha_0))\alpha_0-\widehat{\gamma}(\alpha_0)(1-\alpha_0)}{\alpha_0(1-\alpha_0)}$$
$$=\frac{\Psi'_{\underline{M}}(\alpha_0)}{\beta_0}+\frac{\alpha_0-\widehat{\gamma}(\alpha_0)}{\alpha_0(1-\alpha_0)}=0.$$

Put $\ds s(\alpha,t)=\Psi_{\underline{M}}(\alpha)\left(\frac{1}{\alpha}\right)^t\left(\frac{1}{1-\alpha}\right)^{1-t}$. Then $\partial_1 s(\alpha,t)$ exists at $\alpha_0$ and for fixed $t, \: s(\alpha,t)$ is Lipschitz in $\alpha$ on $[\overline{\alpha}_1, \overline{\alpha}_2]$.
Using \eqref{psima} we obtain
\begin{align}\nonumber
    \partial_1s(\alpha_0,t)&=\left(\frac{\Psi'_{\underline{M}}(\alpha_0)}{\Psi_{\underline{M}}(\alpha_0)}-\frac{t}{\alpha_0}+\frac{1-t}{1-\alpha_0}\right)s(\alpha_0,t) \\
 \label{IL13*b} 
     &=\left(\frac{\Psi'_{\underline{M}}(\alpha_0)}{\beta_0}-\frac{\hat{\gamma}(\alpha_0)}{\alpha_0}+\frac{1-\widehat{\gamma}(\alpha_0)}{1-\alpha_0}+\frac{\widehat{\gamma}(\alpha_0)-t}{\alpha_0}-\frac{t-\widehat{\gamma}(\alpha_0)}{1-\alpha_0}\right)s(\alpha_0,t)\\ \nonumber
     &=s(\alpha_0,t)(\widehat{\gamma}(\alpha_0)-t)\left(\frac{1}{\alpha_0}+\frac{1}{1-\alpha_0}\right).
\end{align}
Since $\widehat{\gamma}(\alpha_0)-\gamma \neq 0$ we have $\partial_1s(\alpha_0,\gamma)\neq 0$. Select and fix $\delta_0>0$ such that for $|\Delta \alpha|<\delta_0$
\begin{equation}\label{IL14*a}
|s(\alpha_0+\Delta \alpha,\gamma)-s(\alpha_0,\gamma)-\Delta\alpha\cdot \partial_1 s(\alpha_0,\gamma)|<\frac{1}{2}|\Delta\alpha|\cdot |\partial_1s(\alpha_0,\gamma)|.
\end{equation}
Since $s(\alpha_0,\gamma)>0$, by \eqref{IL13*b}, $\sgn(\partial_1s(\alpha_0,\gamma))=\sgn(\widehat{\gamma}(\alpha_0)-\gamma)$. Choose $\Delta\alpha$ with $|\Delta\alpha|<\delta_0$ such that
\begin{equation}\label{ILE601*a}
  \alpha_0+\Delta\alpha \in [\overline{\alpha}_1, \overline{\alpha}_2], \; \Delta\alpha \cdot \partial_1 s(\alpha_0,\gamma)<0, \text{ and } |\Delta\alpha|<\frac{\bm{\varDelta}_c}{4M_c}.
\end{equation}
By \eqref{IL14*a}
\begin{equation}\label{IL14*b}
s(\alpha_0+\Delta \alpha, \gamma)<s(\alpha_0,\gamma)+\frac{1}{2}\Delta\alpha \cdot \partial_1  s(\alpha_0,\gamma)<s(\alpha_0,\gamma).
\end{equation}
Since $s(\alpha_0,t)$ and $\partial_1 s (\alpha_0,t)$ are continuous in  $t$, choose $\delta_1>0$ such that if $|t-\gamma|<\delta_1$ then
\begin{equation}\label{IL14*bb}
s(\alpha_0+\Delta \alpha,t )< s(\alpha_0,t)+\frac{1}{2} \Delta \alpha \cdot
\partial_1 s(\alpha_0,t), \text{ and } |\widehat{\gamma}(\alpha_0)-t|>\frac{|\widehat{\gamma}(\alpha_0)-\gamma|}{2}.
\end{equation}
Put $$\gamma_N(x)=\frac{1}{N}\sum_{n=0}^{N-1} \chi_{[0,\alpha_0]}(T^n_{\alpha_0,\beta_0}(x)).$$
 By Lemma
 \ref{unique}, $\mu$ is ergodic and hence $\gamma_N(x) \rightarrow \gamma=\mu([0,\alpha_0])$ for $\mmm$ a.e. $x$ and there exists $\widehat{S}_{\gamma} \subset S_{\gamma}$ and $N_0 \in \mathbb{N}$ such that $\lambda(\widehat{S}_{\gamma})>\lambda(S_{\gamma})/2>0$ and  we have
\begin{equation}\label{IL8*a}
|\gamma_N(x)-\gamma|< \delta_1 \text{ for any $N \geq N_0$ and } x \in \widehat{S}_{\gamma}.
\end{equation}
We will fix an $N \geq N_0$ later. Suppose $N$ is given and fixed. We can select a system of intervals $I_l=[d_l,e_l]$ such that $T^N_{\alpha_0,\beta_0}$ is linear and non-constant on $I_l$ but is non-linear on any larger interval containing $I_l$, moreover
\begin{equation}\label{IL9*a}
(d_l,e_l) \cap \widehat{S}_{\gamma} \neq \emptyset \text{ and } \widehat{S}_{\gamma} \sse \bigcup_l I_l.
\end{equation}

\begin{figure}[h]
\centering{
\resizebox{1.15\textwidth}{!}{
%
\psscalebox{1.1 1.2} 
{\large
\begin{pspicture}(3,-5.6816697)(25.069063,2.6816697)
\psline[linecolor=black, linewidth=0.03](4.4090624,-4.71833)(13.409062,-4.71833)(13.409062,-4.71833)
\psline[linecolor=black, linewidth=0.03](6.6090627,-4.51833)(6.6090627,-4.91833)
\psline[linecolor=black, linewidth=0.03](9.809063,-4.51833)(9.809063,-4.91833)
\psline[linecolor=black, linewidth=0.03](6.0090623,-4.51833)(6.0090623,-4.91833)
\psline[linecolor=black, linewidth=0.03](10.409062,-4.51833)(10.409062,-4.91833)
\psline[linecolor=black, linewidth=0.03](6.6090627,-4.71833)(6.6090627,2.68167)
\psline[linecolor=black, linewidth=0.03](9.809063,-4.71833)(9.809063,2.68167)
\psline[linecolor=black, linewidth=0.03, linestyle=dashed, dash=0.17638889cm 0.10583334cm](6.0090623,-4.71833)(6.0090623,2.68167)
\psline[linecolor=black, linewidth=0.03, linestyle=dashed, dash=0.17638889cm 0.10583334cm](10.409062,-4.71833)(10.409062,2.68167)
\psline[linecolor=black, linewidth=0.04](5.0090623,1.0816699)(6.6090627,-2.7183301)(9.809063,1.68167)(11.209063,-0.9183301)
\psline[linecolor=black, linewidth=0.04, linestyle=dashed, dash=0.17638889cm 0.10583334cm](5.0090623,-0.5183301)(6.0090623,-2.7183301)(10.409062,1.4816699)(11.209063,0.2816699)(11.209063,0.2816699)
\rput[t](6.6090627,-5.11833){$d_l$}
\rput[t](9.809063,-5.23833){$e_l$}
\rput[tr](6.0090623,-5.11833){$d_l(\aaa_0+\Delta\aaa)$}
\rput[tl](10.409062,-5.11833){$e_l(\aaa_0+\Delta\aaa)$}
\psline[linecolor=black, linewidth=0.03](15.409062,-4.71833)(21.809063,-4.71833)
\rput[tl](10.609062,1.8816699){$c_{N,l}^+(\aaa_0+\Delta\aaa)$}
\rput[r](5.6090627,-2.7183301){$c_{N,l}^-(\aaa_0+\Delta\aaa)$}
\rput[r](9.209063,1.68167){$c_{N,l}^+$}
\rput[l](7.0090623,-2.7183301){$c_{N,l}^-$}
\rput[tl](11.209063,-0.9183301){$T_{\aaa_0,\bbb_0}^N$}
\rput[l](11.009063,-0.11833008){$T_{\alpha_0+\Delta \alpha,\Psi_{\underline{M}}(\alpha_0+\Delta \alpha)}^N$}
\psline[linecolor=black, linewidth=0.03](21.009062,-4.71833)(21.009062,2.68167)
\psline[linecolor=black, linewidth=0.03](18.609062,-4.71833)(18.609062,2.68167)
\pscustom[linecolor=black, linewidth=0.03]
{\large\large\large\large
\newpath
\moveto(18.209063,2.0816698)
}

\pscustom[linecolor=black, linewidth=0.03]
{\large\large\large\large
\newpath
\moveto(18.209063,2.0816698)
}
\pscustom[linecolor=black, linewidth=0.04]
{\large\large\large\large
\newpath
\moveto(17.809063,2.2816699)
\lineto(18.409063,2.18167)
\curveto(18.709063,2.13167)(19.359062,1.93167)(19.709063,1.78167)
\curveto(20.059063,1.6316699)(20.559063,1.3316699)(20.709063,1.18167)
\curveto(20.859062,1.03167)(21.159063,0.7316699)(21.309063,0.5816699)
}
\pscustom[linecolor=black, linewidth=0.04]
{\large\large\large\large
\newpath
\moveto(17.809063,1.28167)
\lineto(18.609062,0.9816699)
\curveto(19.009062,0.8316699)(19.659063,0.5316699)(19.909063,0.3816699)
\curveto(20.159063,0.23166992)(20.559063,-0.06833008)(20.709063,-0.21833009)
\curveto(20.859062,-0.3683301)(21.109062,-0.61833006)(21.409063,-0.9183301)
}
\pscustom[linecolor=black, linewidth=0.04]
{\large\large\large\large
\newpath
\moveto(17.809063,0.2816699)
\lineto(18.309063,-0.018330079)
\curveto(18.559063,-0.16833007)(19.009062,-0.4683301)(19.209063,-0.61833006)
\curveto(19.409063,-0.7683301)(19.809063,-1.06833)(20.009062,-1.21833)
\curveto(20.209063,-1.3683301)(20.559063,-1.6683301)(20.709063,-1.81833)
\curveto(20.859062,-1.96833)(21.109062,-2.2183301)(21.409063,-2.51833)
}
\psline[linecolor=black, linewidth=0.03, linestyle=dashed, dash=0.17638889cm 0.10583334cm](19.809063,-4.71833)(19.809063,0.48166993)
\psdots[linecolor=black, dotsize=0.2](19.829063,0.40166992)
\psdots[linecolor=black, dotsize=0.2](18.609062,0.94166994)
\psdots[linecolor=black, dotsize=0.2](21.029062,0.8416699)
\psdots[linecolor=black, dotsize=0.2](21.049063,-2.13833)
\rput[t](18.609062,-5.07833){$\overline{\alpha}_n$}
\rput[t](21.009062,-5.13833){$\aaa_0$}
\rput[t](19.809063,-5.13833){$\aaa_n$}
\rput[bl](21.229063,-1.9183301){$\overline{\beta}_n$}
\rput[bl](21.289062,1.1016699){$\bbb_0$}
\rput[bl](19.749062,0.70166993){$\bbb_n$}
\rput[tr](18.429062,0.9816699){$\widehat{\beta}_n$}
\rput[r](17.809063,2.48167){$\Psi_{\underline{M}}=\Psi_{K(\aaa_0,\bbb_0)}$}
\rput[tr](18.609062,-0.11833008){$\Psi_{K(\alpha_0, \overline{\beta}_n)}$}
\rput[r](17.809063,1.28167){$ \Psi_{\underline{\widehat{M}}_n}= \Psi_{K(\overline{\aaa}_n,{\widehat{\bbb}}_n)}$}
\psdots[linecolor=black, dotsize=0.2](9.809063,1.68167)
\psdots[linecolor=black, dotsize=0.2](6.6090627,-2.7183301)
\psdots[linecolor=black, dotsize=0.2](6.0090623,-2.7183301)
\psdots[linecolor=black, dotsize=0.2](10.409062,1.4816699)
\end{pspicture}
}}}
\caption{Illustration for the proofs of Proposition \ref{prlanmark} and Theorem \ref{thlamark} } \label{*figslypic1}
\end{figure}
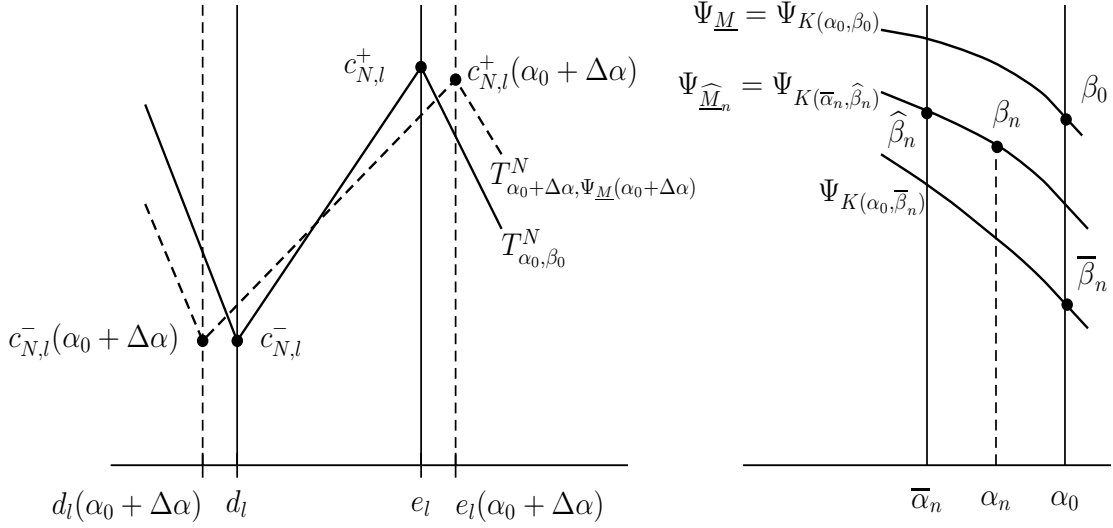

The maximality of the intervals $I_l$ implies that
\begin{equation}\label{IL9*b}
T^N_{\alpha_0,\beta_0}(d_l), T^N_{\alpha_0,\beta_0}(e_l) \in\{c_i:i=1, \dots, k\} \text{ and } T^N_{\alpha_0,\beta_0}(d_l) \neq T^N_{\alpha_0,\beta_0}(e_l).
\end{equation}
From \eqref{IL9*a} it follows that
\begin{equation}\label{IL9*c}
\sum_l \lambda(I_l) \geq \lambda(\widehat{S}_{\gamma}).
\end{equation}
By using \eqref{IL9*b} we introduce the notation
\begin{equation}\label{IL10*aa}
c^-_{N,l}=T^N_{\alpha_0,\beta_0}(d_l) \text{ and } c^+_{N,l}=T^N_{\alpha_0,\beta_0}(e_l).
\end{equation}
From \eqref{IL6*b} and \eqref{IL9*b} it follows that
\begin{equation}\label{IL10*a}
|c_{N,l}^+-c_{N,l}^-| \geq \bm{\varDelta}_c .
\end{equation}
An elementary calculation shows that
\begin{equation}\label{IL10*b}
\begin{split}
   &\left|\frac{d}{dx}(T^N_{\alpha_0,\beta_0}(x))\right|=\left|(T^N_{\alpha_0,\beta_0})'(x)\right|=\left(\left(\frac{\beta_0}{\alpha_0}\right)^{\gamma_N(x)}\left(\frac{\beta_0}{1-\alpha_0}\right)^{1-\gamma_N(x)}\right)^N\\ &\text{ for any } x \in (d_l,e_l).
\end{split}
\end{equation}
During the rest of the proof the reader might find useful to look every so often at
the left half of Figure \ref{*figslypic1}. 
Before getting into more details we try to help the reader by the next heursitic argument.
Looking at the figure one can see one interval of monotonicity for $T^N_{\alpha_0,\beta_0}$. If we change $\aaa_{0}$ to $\aaa_{0}+\Delta\aaa$ then the slope $dT^N_{\alpha_0,\beta_0}(x)/dx$ changes to $dT^N_{\alpha_0+\Delta\aaa,\beta_0}/dx$, the endpoints of these intervals are changing to $e_{l}(\alpha_0+\Delta\aaa)$
and $d_{l}(\alpha_0+\Delta\aaa)$ while the graph of $T^N_{\alpha_0+\Delta\aaa,\beta_0}$ is a segment connecting the points $(d_{l}(\alpha_0+\Delta\aaa),c_{N,l}^{-}(\alpha_0+\Delta\aaa))$ and $(e_{l}(\alpha_0+\Delta\aaa),c_{N,l}^{+}(\alpha_0+\Delta\aaa))$. 
Since we work with the Markov case there is a uniform bound $M_{c}$ from \eqref{IL6*a} (independent of $N$)
on the rate of change of $c_{N,l}^{-}(\alpha_0+\Delta\aaa)$ and $c_{N,l}^{+}(\alpha_0+\Delta\aaa)$. Hence for large $N$s the main effect on the change in the lengths
of the intervals $[e_{l},d_{l}]$ is due to the fact that the slope of the function
$T^N_{\alpha_0,\beta_0}$ is changing over these intervals. We show that if on "many
intervals" $\ggg_{N}(x)$  is not close to the value of $\ggg$ given in \eqref{IL1*b}
then changing $\aaa_{0}$ to a suitably chosen $\aaa_{0}+\DDD\aaa$ will force the absolute value of $dT^N_{\alpha_0+\Delta\aaa,\beta_0}/dx$ decrease significantly
compared to $dT^N_{\alpha_0,\beta_0}(x)/dx$. This will have an effect that the lengths
of the intervals $[d_{l}(\alpha_0+\Delta\aaa),e_{l}(\alpha_0+\Delta\aaa)]$
should increase significantly
compared to that of $[d_{l},e_{l}]$
 and this might mean that these intervals will not fit
anymore into $[0,1]$, which is the contradiction we will obtain in \eqref{*contr}.

Now we return to the details of the proof.
Observe that the value of $\gamma_N(x)$ is constant on $(d_l,e_l)$. Denote this constant by $g_l$. Using \eqref{IL8*a} and \eqref{IL9*a} we obtain
\begin{equation}\label{ILE8*a}
|g_l-\gamma|<\delta_1.
\end{equation}
By topological conjugacy of $T_{\alpha,\Psi_{\underline{M}}(\alpha)}$ and $ T_{\alpha_0,\beta_0}$ if we change $\alpha \in [\overline{\alpha}_1,\overline{\alpha}_2]$ then the system of maximal intervals of monotonicity of $T_{\alpha,\Psi_{\underline{M}}(\alpha)}$
 is not changing in number and only endpoints of these intervals vary in a Lipschitz continuous way. This means that we can consider the intervals $[d_l(\alpha),e_l(\alpha)], \: \alpha \in [\overline{\alpha}_1,\overline{\alpha}_2]$ and the absolute value of the slope of $T^N_{\alpha,\Psi_{\underline{M}}(\alpha)}$ on these intervals will be for any $x \in(d_l(\alpha),e_l(\alpha))$
\begin{equation}\label{IL11*a}
\begin{split}
&\left|\frac{d}{dx}(T^N_{\alpha,\Psi_{\underline{M}}(\alpha)}(x))\right|=\left(\left(\frac{\Psi_{\underline{M}}(\alpha)}{\alpha}\right)^{g_l} \cdot\left(\frac{\Psi_{\underline{M}}(\alpha)}{1-\alpha}\right)^{1-g_l}\right)^N\\
&=(\Psi_{\underline{M}}(\alpha))^N \cdot \left(\left(\frac{1}{\alpha}\right)^{g_l} \cdot\left(\frac{1}{1-\alpha}\right)^{1-g_l}\right)^N=(s(\alpha,g_l))^N.
\end{split}
\end{equation}
By \eqref{IL14*bb} and \eqref{IL8*a} we have
\begin{equation}\label{IL14*bbb}
  s(\alpha_0+\Delta \alpha, g_l)<s(\alpha_0,g_l)+\frac{1}{2} \Delta \alpha \partial_1 s(\alpha_0,g_l),
\end{equation}
that is
\begin{equation}\label{IL14*c}
\begin{split}
  &\frac{s(\alpha_0,g_l)}{s(\alpha_0+\Delta \alpha,g_l)}>\frac{1}{1+\frac{1}{2}  \Delta \alpha \frac{\partial_1 s(\alpha_0,g_l)}{s(\alpha_0,g_l)}}\\
  &=\frac{1}{1-\frac{1}{2}\left|\frac{\partial_1s(\alpha_0,g_l)}{s(\alpha_0,g_l)}\right||\Delta\alpha|}>1+\frac{1}{2} \left|\frac{\partial_1s(\alpha_0,g_l)}{s(\alpha_0,g_l)}\right|\cdot|\Delta\alpha|.
\end{split}
\end{equation}
Using $\frac{1}{\alpha}+\frac{1}{1-\alpha} \geq 2$, \eqref{IL13*b}, \eqref{IL14*bb}, \eqref{IL14*c} and Bernoulli's inequality
\begin{equation}\label{IL15*a}
\begin{split}
&(s(\alpha_0+\Delta \alpha,g_l))^N<\frac{(s(\alpha_0,g_l))^N}{\left(1+\frac{1}{2}\left|\frac{\partial_1s(\alpha_0,g_l)}{s(\alpha_0,g_l)}\right||\Delta \alpha|\right)^N}\\
&<\frac{(s(\alpha_0,g_l))^N}{1+N \cdot \frac{1}{4}|\widehat{\gamma}(\alpha_0)-\gamma|(\frac{1}{\alpha}+\frac{1}{1-\alpha})|\Delta\alpha|}<\frac{(s(\alpha_0,g_l))^N}{1+ \frac{N}{2}|\widehat{\gamma}(\alpha_0)-\gamma||\Delta\alpha|}.
\end{split}
\end{equation}
Since the choice of $\Delta \alpha$ did not depend on $N$ we can suppose that $N$ is so large that
\begin{equation}\label{IL15*b}
1+\frac{N}{2} |\widehat{\gamma}(\alpha_0)-\gamma|\cdot  |\Delta \alpha | > \frac{20}{\lambda(\widehat{S}_{\ggg})}.
\end{equation}
By \eqref{IL10*b} and \eqref{IL11*a} we know that
\begin{equation}\label{IL16*b}
  \lambda(I_l)=e_l-d_l=\frac{|c^+_{N,l}-c^-_{N,l}|}{(s(\alpha_0,g_l))^N}=\frac{|c^+_{N,l}(\alpha_0)-c^-_{N,l}(\alpha_0)|}{(s(\alpha_0,g_l))^N}.
\end{equation}
We want to obtain an estimate of $e_l(\alpha_0+\Delta \alpha)-d_l(\alpha_0+\Delta \alpha)$. By \eqref{IL6*a} $$|c^{\pm}_{N,l}(\alpha_0+\Delta \alpha)-c^{\pm}_{N,l}(\alpha_0)|\leq M_c \cdot |\Delta \alpha|,$$ and hence using \eqref{IL6*b} and \eqref{ILE601*a}
\begin{equation}\label{IL16*a}
\begin{split}
  &|c^+_{N,l}(\alpha_0+\Delta \alpha)-c^-_{N,l}(\alpha_0+\Delta\alpha)|\\
  &>|c^+_{N,l}(\alpha_0)-c^-_{N,l}(\alpha_0)|-2M_c|\Delta\alpha|>\frac{1}{2}|c^+_{N,l}(\alpha_0)-c_{N,l}^-(\alpha_0)|.
\end{split}
\end{equation}
By \eqref{IL11*a}, \eqref{IL15*a}, \eqref{IL15*b}, \eqref{IL16*b} and \eqref{IL16*a} we obtain
\begin{equation}\label{IL17*a}
\begin{split}
 &e_l(\alpha_0+\Delta \alpha)-d_l(\alpha_0+\Delta \alpha)=\frac{|c^+_{N,l}(\alpha_0+\Delta \alpha)-c^-_{N,l}(\alpha_0+\Delta \alpha)|}{|s(\alpha_0+\Delta \alpha,g_l)|^N}\\
 &>\frac{\frac{1}{2}|c^+_{N,l}(\alpha_0)-c^-_{N,l}(\alpha_0)|}{|s(\alpha_0,g_l)|^N} \cdot
 \Big (1+\frac{N}{2}|\widehat{\gamma}(\alpha_0)-\gamma|\cdot|\Delta\alpha|\Big )\\
 &>\frac{|c^+_{N,l}(\alpha_0)-c^-_{N,l}(\alpha_0)|}{|s(\alpha_0,g_l)|^N}\cdot\frac{10}{\lambda(\widehat{S}_{\gamma})}=\lambda(I_l) \cdot \frac{10}{\lambda(\widehat{S}_\gamma)}.
\end{split}
\end{equation}
By topological conjugacy of $T_{\alpha_0+\Delta \alpha,\Psi_{\underline{M}}(\alpha_0+\Delta \alpha)}$ and $T_{\alpha_0,\beta_0}$ the intervals $I_l(\alpha_0+\Delta \alpha)=[d_l(\alpha_0+\Delta \alpha),e_l(\alpha_0+\Delta \alpha)]$ are non-overlapping for fixed $\Delta\alpha$ and are in $[0,1]$. This contradicts \eqref{IL9*c} since we have 
\begin{equation}
\label{*contr}
1 \geq \sum_{l}\lambda(I_l(\alpha_0+\Delta\alpha))>\sum_l \lambda(I_l)\cdot \frac{10}{\lambda(\widehat{S}_{\gamma})}\geq 10.
\end{equation}
 Hence $\gamma$ satisfies \eqref{IL1*b} and Proposition \ref{prlanmark} is proved.
\end{proof}

\section{Differentiability of the isentropes (ergodic theory approach)}\label{Disentr}

In this section we prove that isentropes are continuously differentiable
curves. We have already seen that results of \cite{[MV]} imply that they are
(locally uniformly) Lipschitz. There are two possible ways to verify
that they are differentiable. One way, the one which we call analytic method,
is to use the auxiliary function $\TTT_{\uM}$,
\eqref{*implder}
 and implicit differentiation. If one can verify that for  $(\aaa,\bbb)\in U$, $\uM=K(\aaa,\bbb)$ we have 
$\dd_{2}\TTT_{\uM}(\aaa,\bbb)\not= 0$ then this argument works. Unfortunately,
to deal with partial derivatives of $\TTT_{\uM}$ is a quite unpleasant and technical task.  We have a manuscript in prepartion, \cite{[BKtheta]} which discusses this other approach. 
In this paper we use a much more elegant and less technical argument which 
we called the ergodic theory approach and
is based on  Proposition \ref{prlanmark} which says that
the slope of the tangent of isentropes wherever it exists can be expressed by $\ggg$, which depends
on the unique acim of the skew tent map considered.
Then by using approximations, Proposition \ref{1032} and uniqueness of the acim first we verify in Lemma \ref{thdiffmark} continuous differentiability of the isentrope
in the Markov case. Then by another approximation argument we prove the general case in Theorem \ref{thdiffnonmark}.

\begin{lemma}\label{thdiffmark}
If $\underline{M} \in \mathfrak{M}_{< \infty}$ then $\Psi'_{\underline{M}}$ exists and is continuous on $(\alpha_1(\underline{M}),\alpha_2(\underline{M}))$.
\end{lemma}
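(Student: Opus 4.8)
The plan is to identify a continuous candidate for $\Psi'_{\underline{M}}$ and then upgrade almost-everywhere differentiability to genuine $C^1$ smoothness via the fundamental theorem of calculus. By Proposition \ref{LIP}, on any compact subinterval $[\overline{\alpha}_1,\overline{\alpha}_2]\subset(\alpha_1(\underline{M}),\alpha_2(\underline{M}))$ the function $\Psi_{\underline{M}}$ is Lipschitz, hence absolutely continuous, so $\Psi'_{\underline{M}}$ exists $\lambda$-almost everywhere and $\Psi_{\underline{M}}$ is recovered by integrating its derivative. At every $\alpha$ where $\Psi'_{\underline{M}}(\alpha)$ exists, the point $(\alpha,\Psi_{\underline{M}}(\alpha))\in U$ is Markov (we are in $\mathfrak{M}_{<\infty}$), so Proposition \ref{prlanmark} applies and yields, via \eqref{IL1*bb}, the identity
\[
\Psi'_{\underline{M}}(\alpha)=G(\alpha):=\frac{\bigl(\gamma(\alpha)-\alpha\bigr)\,\Psi_{\underline{M}}(\alpha)}{\alpha(1-\alpha)},\qquad
\gamma(\alpha):=\mu_{\alpha,\Psi_{\underline{M}}(\alpha)}\bigl([0,\alpha]\bigr),
\]
where $\mu_{\alpha,\Psi_{\underline{M}}(\alpha)}$ is the unique acim of $T_{\alpha,\Psi_{\underline{M}}(\alpha)}$. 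Thus $\Psi'_{\underline{M}}=G$ almost everywhere, and if I can show that $G$ is continuous, then $\Psi_{\underline{M}}(\alpha_2)-\Psi_{\underline{M}}(\alpha_1)=\int_{\alpha_1}^{\alpha_2}G$ forces $\Psi_{\underline{M}}$ to be continuously differentiable with $\Psi'_{\underline{M}}=G$ everywhere on the subinterval, and hence on all of $(\alpha_1(\underline{M}),\alpha_2(\underline{M}))$.

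Since $\Psi_{\underline{M}}$ is continuous (Theorem \ref{MVab}) and $\alpha,\,1-\alpha$ are continuous and bounded away from $0$ on $[\overline{\alpha}_1,\overline{\alpha}_2]$, the continuity of $G$ reduces to the continuity of $\alpha\mapsto\gamma(\alpha)$. I would prove this sequentially: fix $\alpha_0$ and take $\alpha_n\to\alpha_0$, so that $(\alpha_n,\beta_n):=(\alpha_n,\Psi_{\underline{M}}(\alpha_n))\to(\alpha_0,\beta_0)$ inside $U$. Writing $f_n$ for the invariant density of $T_{\alpha_n,\beta_n}$, the goal is $\gamma(\alpha_n)=\int_0^{\alpha_n}f_n\to\int_0^{\alpha_0}f_0=\gamma(\alpha_0)$. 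Proposition \ref{1032} supplies exactly the needed machinery: once the partition hypothesis \eqref{partassn} is verified, conclusion $(A)$ gives precompactness of $\{f_n\}$ in $L^1$ together with a uniform variation bound (hence a uniform $\|f_n\|_{\infty}$ bound), and conclusion $(B)$ identifies every $L^1$-limit point as an invariant density of $T_{\alpha_0,\beta_0}$; by uniqueness (Lemma \ref{unique}) that limit is $f_0$, so the whole sequence satisfies $f_n\to f_0$ in $L^1$. Splitting $\int_0^{\alpha_n}f_n-\int_0^{\alpha_0}f_0$ into $\int_{\alpha_0}^{\alpha_n}f_n$ and $\int_0^{\alpha_0}(f_n-f_0)$ and using the uniform sup bound together with $\alpha_n\to\alpha_0$ then gives the continuity of $\gamma$.

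The main obstacle is verifying the partition hypothesis \eqref{partassn} in the present Markov setting, since Lemma \ref{lempart} covers only the non-Markov case $\underline{M}\in\mathfrak{M}_{\infty}$. Here $\alpha_0$ is periodic, so among the division points of $\mathcal{P}_0^{(m)}$ — namely $0$, $1$, and the preimages $T^{-j}_{\alpha_0,\beta_0}(\alpha_0)$ for $0\le j\le m-1$ — there are forced coincidences coming from periodicity (a preimage at level $j$ may equal one at level $j+k$, where $k=n_{\underline{M}}+1$). The point I would stress is that these coincidences are \emph{structural}: along the isentrope the kneading sequence is constant, so all the maps $T_{\alpha,\Psi_{\underline{M}}(\alpha)}$ are Markov and topologically conjugate to $T_{\alpha_0,\beta_0}$, and consequently, for each fixed $m$, the finitely many division points of $\mathcal{P}_{\alpha}^{(m)}$ depend continuously on $\alpha$ and realize a coincidence pattern independent of $\alpha$. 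Therefore distinct division points of $\mathcal{P}_0^{(m)}$, separated by some $\delta_{0,m}>0$, have continuous extensions that stay separated by at least $\delta_{0,m}/2$ for $(\alpha_n,\beta_n)$ near $(\alpha_0,\beta_0)$, while coinciding ones stay coincident; this gives $\min_{I\in\mathcal{P}_n^{(m)}}\lambda(I)\ge\delta_m>0$ for all large $n$, and absorbing the finitely many remaining indices into the minimum (as at the end of the proof of Lemma \ref{lempart}) establishes \eqref{partassn}. With this in hand the argument of the previous paragraph yields continuity of $\gamma$, hence of $G$, and the fundamental theorem of calculus step of the first paragraph promotes $\Psi'_{\underline{M}}=G$ from holding almost everywhere to holding everywhere, with $G$ continuous, proving the lemma.
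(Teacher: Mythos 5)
Your proposal is correct and follows essentially the same route as the paper's own proof: both express the almost-everywhere derivative through Proposition \ref{prlanmark} as $\Psi'_{\underline{M}}(\alpha)=\bigl(\gamma(\alpha)-\alpha\bigr)\Psi_{\underline{M}}(\alpha)/\bigl(\alpha(1-\alpha)\bigr)$ with $\gamma(\alpha)=\mu_{\alpha,\Psi_{\underline{M}}(\alpha)}\bigl([0,\alpha]\bigr)$, and both stabilize $\gamma$ along parameter sequences via Proposition \ref{1032} together with uniqueness of the acim (Lemma \ref{unique}), finally upgrading almost-everywhere differentiability using the Lipschitz property from Proposition \ref{LIP}. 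The differences are organizational rather than substantive: the paper argues by contradiction, assuming two distinct subsequential limits $d_1\neq d_2$ of $\Psi'_{\underline{M}}$, whereas you argue directly via continuity of $\gamma$ and the fundamental theorem of calculus, and you also spell out the verification of \eqref{partassn} in the Markov case (persistence of the coincidence pattern of division points along the isentrope), a step the paper dismisses as ``not difficult to check''; both of these variants are sound.
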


\begin{proof}
Choose $\alpha_0 \in(\alpha_1(\underline{M}),\alpha_2(\underline{M}))$. We know that $\Psi'_{\underline{M}}(\aaa)$ exists for almost every  $\aaa\in (\alpha_1(\underline{M})),\alpha_2(\underline{M}))$.
 Denote by $D_{\underline{M}}$ the set of those $\alpha$s where $\Psi'_{\underline{M}}$ exists. Suppose that there exists $d_1 \neq d_2 \in [-\infty,\infty]$ and $\alpha_{i,n} \rightarrow \alpha_0, \: (i=1,2)$ such that $\alpha_{i,n} \in D_{\underline{M}}$, and $\Psi'_{\underline{M}}(\alpha_{i,n})\rightarrow d_i, \: (i=1,2)$. Put $\beta_{i,n}=\Psi_{\underline{M}}(\alpha_{i,n}), \: i=1,2$. Then $(\alpha_{i,n},\beta_{i,n}) \rightarrow (\alpha_0,\beta_0)=(\alpha_0, \Psi_{\underline{M}}(\alpha_0))$, for $i=1,2$ as $n \rightarrow \infty$.
 Since $\Psi_{\underline{M}}$ is an isentrope we know that the maps $T_{\alpha_{i,n},\beta_{i,n}}$ are all topologically conjugate to $T_{\alpha_0,\beta_0}$. It is not difficult to check that the assumptions of Proposition \ref{1032} are satisfied. Hence if we denote by $f_{i,n}$ the invariant densities of $T_{\alpha_{i,n},\beta_{i,n}}$ which appear in Proposition \ref{1032} then there are subsequences $n_{k,i}$ such that $f_{i, n_{k,i}} \rightarrow f_{i,0}$ in $L^1$, and $f_{i,0}, \: (i=1,2)$ are both invariant densities of $T_{\alpha_0,\beta_0}$.
 By Proposition \ref{unique}, $T_{\alpha_0,\beta_0}$ has a unique invariant density and hence $f_{1,0}=f_{2,0}=f_0$ almost everywhere. Denote by $\mu_{i,n}$ and $\mu_0$ the acims with densities $f_{i,n}$ and $f_0$, respectively. For $i=1,2$ we have
\begin{equation}\label{ILB7*a}
\gamma_{i,n_{k,i}}=\alpha_{i,n_{k,i}}(1-\alpha_{i,n_{k,i}})\frac{\Psi'_{\underline{M}}(\alpha_{i,n_{k,i}})}{\Psi_{\underline{M}}(\alpha_{i,n_{k,i}})}+\alpha_{i,n_{k,i}} \rightarrow \gamma_i=\alpha_0(1-\alpha_0)\frac{d_i}{\beta_0}+\alpha_0.
\end{equation}
From $d_1 \neq d_2$ it follows that $\gamma_1 \neq \gamma_2$. By Proposition \ref{prlanmark}
\begin{equation}\label{ILI*3b}
\gamma_{i,n_{k,i}}=\mu_{n_{k,i}}([0,\alpha_{i,n_{k,i}}])=\int_{[0,\alpha_{i,n}]}f_{i,n_{k,i}} d\lambda, \: i=1,2.
\end{equation}
Set $\gamma_0=\mu_0([0,\alpha_0])=\int_{[0,\alpha_0]}f_0 d\lambda$. We denote by $I_{k,i}$ the interval with endpoints $\alpha_0$ and $\alpha_{i,n_{k,i}}$. We know that \begin{equation}\label{ILI*3a}
  \int_{[0,1]} |f_{i,n_{k,i}}-f_0|d \lambda \rightarrow 0 \text{ as } k \rightarrow + \infty, \text{ for $i=1,2$}.
\end{equation}
Hence
\begin{equation}\label{ILI*4a}
\begin{split}
&|\gamma_0-\gamma_{i,n}|=\left|\int_{[0,\alpha_0]}f_0 d \lambda-\int_{[0, \alpha_{i,n_{k,i}}]}f_{i,n_{k,i}}d \lambda \right|\\
& \leq \left|\int_{I_{k,i}}f_0 d \lambda\right|+\int_{[0, \alpha_{i,n_{k,i}}]}|f_0-f_{i,n_{k,i}}|d \lambda 
\\
 &\leq\left|\int_{I_{k,i}}f_0 d \lambda\right|+\|f-f_{i,n_{k,i}}\|_1 \rightarrow 0 \text{ as } k \rightarrow \infty.
 \end{split}
\end{equation}
Since $\gamma_{i,n_{k,i}} \rightarrow \gamma_i, \: i=1,2$ and $\gamma_1 \neq \gamma_2$, it is impossible that $\gamma_{i,n_{k,i}} \rightarrow\gamma_0, \: i=1,2$. Hence $\Psi'_{\underline{M}}|_{D_{\underline{M}}}$ has a limit 
at every $\alpha_0 \in (\alpha_1(\underline{M}),\alpha_2{\underline{M}})$. Since $\Psi_{\underline{M}}$ is locally Lipschitz and $D_{\underline{M}}$ is of full measure in $(\alpha_1(\underline{M}),\alpha_2(\underline{M}))$ we obtained that $\Psi'_{\underline{M}}(\alpha_0)$ exists 
and $\Psi'_{\underline{M}}$ is  continuous at any 
$\alpha_0 \in (\alpha_1(\underline{M}),\alpha_2(\underline{M}))$.
\end{proof} 

Next we prove Theorem \ref{thdiffnonmark} about the general case.
\begin{proof}[Proof of Theorem \ref{thdiffnonmark}]
The Markov case $\underline{M} \in \mathfrak{M}_{< \infty}$ is Lemma \ref{thdiffmark}. 
In \cite{[BB]} there are some considerations showing that the curves $\{(\alpha, \Psi_{\underline{M}}(\alpha)): \underline{M}\in \mathfrak{M}^0_{< \infty}\}$ are dense in $ U^0$. By renormalization, or by using directly the argument from \cite{[BB]} one can see that the curves $\{(\alpha,\Psi_{\underline{M}}(\alpha)): \:\underline{M} \in \mathfrak{M}_{< \infty}\}$ are dense in $ U$. Suppose that $\underline{M} \in \mathfrak{M}_{\infty}$ is fixed $\beta_0=\Psi_{\underline{M}}(\alpha_0), \: (\alpha_0, \beta_0) \in  U, \: K(\alpha_0, \beta_0)=\underline{M}.$
 Then there are no $C$s in $\underline{M}$ and $T^{k+1}_{\alpha_0,\beta_0}(\alpha_0)=T^k_{\alpha_0,\beta_0}(\beta_0) \neq \alpha_0$ for any $k \geq0$. This also implies that
\begin{equation}\label{ILB20*a}
T^k_{\alpha_0,\beta_0}(\alpha_0) \neq T^{k'}_{\alpha_0,\beta_0}(\alpha_0) \text{ if } k'>k \geq 0.
\end{equation}
Choose $[\overline{\alpha}_1,\overline{\alpha}_2]\subset (\alpha_1(\underline{M}),\alpha_2(\underline{M}))$. By Proposition \ref{LIP} $,
\Psi_{\underline{M}}$ is Lipschitz on $[\overline{\alpha}_1,\overline{\alpha}_2]$ and $\Psi'_{\underline{M}}$ exists and is bounded almost everywhere on $[\overline{\alpha}_1,\overline{\alpha}_2]$. Suppose that $\Psi_{\underline{M}}$ is not differentiable at $\alpha_0 \in (\overline{\alpha}_1,\overline{\alpha}_2).$ This means that there is $d_1 \neq d_2$ such that we can select $\alpha_{i,n} \rightarrow\alpha_0, \: i=1,2$, such that
\begin{equation}
\frac{\Psi_{\underline{M}}(\alpha_{i,n})-\Psi_{\underline{M}}(\alpha_0)}{\alpha_{i,n}-\alpha_0} \rightarrow d_i, \: i=1,2.
\end{equation}
Since the Markov isentropes are dense in $ U$ we can choose $\underline{M}_n \in \mathfrak{M}_{< \infty}$ such that
\begin{equation}
\begin{split}
 &\frac{\Psi_{\underline{M}_n}(\alpha_{i,n})-\Psi_{\underline{M}_n}(\alpha_0)}{\alpha_{i,n}-\alpha_0} \rightarrow d_i,   \Psi_{\underline{M}_n}(\alpha_{i,n})\rightarrow \beta_0, \\
  &\text{ and } \Psi_{\underline{M}_n}(\alpha_{0}) \rightarrow \beta_0, \text{ as } n \rightarrow \infty, i= 1,2.
  \end{split}
\end{equation}
By Lemma \ref{thdiffmark} and by the Mean Value Theorem we can choose $\overline{\alpha}_{i,n} \rightarrow \alpha_0$ such that
\begin{equation}
  \Psi_{\underline{M}_n}(\overline{\alpha}_{i,n})=\overline{\beta}_{i,n} \rightarrow \beta_0 \text { and } \Psi'_{\underline{M}_n}(\overline{\alpha}_{i,n}) \rightarrow d_i \text{ for } i=1,2.
\end{equation}
We denote by $\mu_{i,n}$ the acim of $T_{\overline{\alpha}_{i,n}, \overline{\beta}_{i,n}}, \: i=1,2$ and $f_{i,n}$ denotes the corresponding invariant density.
 By Lemma \ref{lempart} assumption 
\eqref{partassn} is satisfied for $(\overline{\alpha}_{i,n},\overline{\beta}_{i,n}) \rightarrow (\alpha_0,\beta_0)$
 for $i=1,2$.
 Then we can apply Proposition \ref{1032} in this case as well and we conclude that for suitable subsequences $f_{i,n_{k,i}} \rightarrow f_0$ as $k \rightarrow +\infty$ where $f_0$ is the unique invariant density of $T_{\alpha_0, \beta_0}$. Now by using $\overline{\alpha}_{i,n_{k,i}}$ instead of $\alpha_{i,n_{k,i}}$ one can argue as we did in the end of the proof of Lemma \ref{thdiffmark} to obtain \eqref{ILB7*a}, \eqref{ILI*3b}, \eqref{ILI*3a} and \eqref{ILI*4a}. This way we can obtain a contradiction as in Lemma \ref{thdiffmark}.
\end{proof}


\section{Isentropes and Lyapunov exponents, the general case}\label{secisnonmar}
Next we prove Theorem \ref{thlamark}, the main
 result of our paper. Its special Markov case, assuming differentiability of the isentrope at the point considered was discussed in Section \ref{secismar}.
\begin{proof}[Proof of Theorem \ref{thlamark}]
The case $K(\alpha_0,\beta_0)=\underline{M} \in \mathfrak{M}_{< \infty}$ was proved in Proposition \ref{prlanmark}. By Theorem \ref{thdiffnonmark} we know that $\Psi'_{\underline{M}}(\aaa)$ exists for any $\underline{M} \in \mathfrak{M}$ and $\aaa\in (\aaa_{1}(\uM),\aaa_{2}(\uM))$. Next we suppose that $K(\alpha_0,\beta_0) \in \mathfrak{M}_{\infty}$, that is there is no $C$ in $K(\alpha_0,\beta_0)$. We use again the fact that isentropes corresponding to Markov systems are dense in $ U$. We will select a suitable $(\alpha_n,\beta_n)\rightarrow (\alpha_0,\beta_0)$ such that $K(\alpha_n,\beta_n)=\underline{M}_n \in \mathfrak{M}_{< \infty}$. Again we choose $\overline{\alpha}_1<\overline{\alpha}_2$ such that $\alpha_0 \in (\overline{\alpha}_1,\overline{\alpha}_2)\subset [\overline{\alpha}_1,\overline{\alpha}_2]\subset (\overline{\alpha}_1(\underline{M}),\overline{\alpha}_2(\underline{M}))$. Suppose $n \in \mathbb{N}$ is given. Choose $\overline{\alpha}_n < \alpha_0$ such that
\begin{equation}\label{ILF2*a}
|\overline{\alpha}_n-\alpha_0|< \frac{1}{n} \text{ and } \left|\frac{\Psi_{\underline{M}}(\overline{\alpha}_n)-\Psi_{\underline{M}}(\alpha_0)}{\overline{\alpha}_n-\alpha_0}-\Psi'_{\underline{M}}(\alpha_0)\right|<\frac{1}{2n}.
\end{equation}
Select $\overline{\beta}_n$ such that
\begin{equation}\label{ILF2*b}
0<\bbb_{0}-\overline{\beta}_n=\Psi_{\underline{M}}(\alpha_0)-\overline{\beta}_n<\frac{1}{4n}|\overline{\alpha}_n-\alpha_0|.
\end{equation}
The right half of Figure \ref{*figslypic1} might turn out to be useful 
to help to understand the rest of the proof.
 
Since isentropes do not cross $\Psi_{K(\alpha_0, \overline{\beta}_n)}<\Psi_{K(\alpha_0, {\beta}_0)}=\Psi_{\uM}$ at points where they are both defined.
By choosing $\overline{\beta}_n$ sufficiently close to
$\bbb_{0}$ we can ensure that they are both defined on 
$[\overline{\alpha}_n,{\alpha}_0]. $

Select $\widehat{\beta}_n$ such that
\begin{equation}\label{ILF3*a}
\begin{split}
&0<\Psi_{\uM} (\overline{\alpha}_n)-\widehat{\beta}_n< \frac{1}{4n}|\overline{\alpha}_n-\alpha_0|,\quad \Psi_{K(\alpha_0, \overline{\beta}_n)}(\overline{\alpha}_n)<\widehat{\beta}_n\\
 &\text{ and } K(\overline{\alpha}_n, \widehat{\beta}_n)=\widehat{\underline{M}}_n \in \mathfrak{M}_{< \infty}.
\end{split}
\end{equation}

Since isentropes do not cross we have
\begin{equation}\label{ILF3*b}
  \overline{\beta}_n< \Psi_{\underline{\widehat{M}}_n}(\alpha_0)<\Psi_{\underline{M}_n}(\alpha_0)=\beta_0.
\end{equation}

Recalling that $\Psi_{\underline{\widehat{M}}_n}(\overline{\alpha}_n)=\widehat{\beta}_n$ by \eqref{ILF2*a}, \eqref{ILF2*b}, \eqref{ILF3*a} and \eqref{ILF3*b} we obtain that
\begin{equation}\label{ILF3*c}
\left|\frac{\Psi_{{\widehat{\uM}_n}}(\overline{\alpha}_n)-\Psi_{{\widehat{\uM}_n}}(\alpha_0)}{\overline{\alpha}_n-\alpha_0} - \Psi_{\underline{M}}'(\alpha_0)\right|<\frac{1}{n}.
\end{equation}
Since $\Psi_{\underline{\widehat{M}}_n}$ is differentiable on $[ \overline{\alpha}_n, \alpha_0]$ by the Mean Value Theorem we can choose $\alpha_n \in ( \overline{\alpha}_n, \alpha_0)$ such that
$$\Psi'_{\underline{\widehat{M}}_n}(\alpha_n)=\frac{\Psi_{\underline{\widehat{M}}_n}(\overline{\alpha}_n)-\Psi_{\underline{\widehat{M}}_n}(\alpha_0)}{\overline{\alpha}_n-\alpha_0}. $$ 

From \eqref{ILF3*c} it follows that
\begin{equation}\label{ILF4*a}
|\Psi'_{\underline{\widehat{M}}_n}({\alpha}_n)-\Psi'_{\underline{M}}(\alpha_0)|< \frac{1}{n}.
\end{equation}
Set $\beta_n=\Psi_{\underline{\widehat{M}}_n}(\overline{\alpha}_n)$. 
By the local uniform Lipschitz property of the isentropes mentioned
in Remark \ref{*remlip}
it is clear that $(\alpha_n,\beta_n) \rightarrow(\alpha_0, \beta_0)$. Since $\underline{\widehat{M}}_n \in \mathfrak{M}_{<\infty}$ we can apply Proposition \ref{prlanmark} at the point $(\alpha_n,\beta_n)$ to the isentrope $\Psi_{\underline{\widehat{M}}_n}$. 

By Lemma \ref{lempart}  assumption \eqref{partassn} is satisfied. Hence if $\mu_n$ and $f_n$ denote the acim and its density for $T_{\alpha_n,\beta_n}, \: n=0,1, \dots$ then by Proposition \ref{1032} for a suitable subsequence $n_k$ the sequence $f_{n_k} \rightarrow f_0$ in $L^1$. Now $$\gamma_{n_k}=\mu_{n_k}([0,\alpha_{n_k}])=\int_{0}^{\alpha_{n_k}}f_{n_k}d \lambda$$ $$=
\int_{0}^{\alpha_{0}}f_{0}\: d \lambda+\int_{0}^{\alpha_{n_k}}f_{n_k}\:d \lambda-\int_{0}^{\alpha_{0}}f_0 \:d \lambda=\mu_0([0,\alpha_0])+A_k=\gamma_0+A_k.$$

We have
$$|A_k|=\left |\int_{0}^{\alpha_{n_k}}f_{n_k}d \lambda-\int_{0}^{\alpha_{0}}f_{0}\: d \lambda \right | \leq \int_{0}^{\alpha_{n_k}}|f_{n_k}-f_0|d \lambda+\int_{\alpha_{n_k}}^{\alpha_{0}}f_{0}\: d \lambda$$ $$ \leq \|f_{n_k}-f_0\|_1+\int_{\alpha_{n_k}}^{\alpha_{0}}f_{0}\: d \lambda \rightarrow 0.$$ 
Hence $\gamma_{n_k} \rightarrow \gamma_0$. 

By Proposition \ref{prlanmark} we have
\begin{equation}\label{ILF6*a}
  \Lambda_{\alpha_n,\beta_n}=\Lambda_n=\gamma_n \log \frac{\beta_n}{\alpha_n} + (1-\gamma_n) \log \frac{\beta_n}{1-\alpha_n} \text{ where } \gamma_n \text{ satisfies }
\end{equation}
\begin{equation}\label{ILF6*b}
\gamma_n=\mu_n([0,\alpha_n])=%
\alpha_n(1-\alpha_n)\frac{\Psi'_{\underline{\widehat{M}}_n}(\alpha_n)}{\beta_n}+\alpha_n.
\end{equation}
Using  \eqref{ILF4*a} and $\gamma_{n_k} \rightarrow \gamma_0$ by taking limit as $k \rightarrow \infty$ we obtain that \eqref{IL1*a} and \eqref{IL1*b} hold for $T_{\alpha_0,\beta_0}$.
\end{proof}


We thank the referee of this paper for making valuable comments and pointing out some useful references.

\end{document}